\theoremstyle{plain}
\newtheorem{main-theorem}{Theorem}
\newtheorem{theo}{Theorem}[section]
\newtheorem{prop}[theo]{Proposition}
\newtheorem{lemm}[theo]{Lemma}
\newtheorem{deff}[theo]{Definition}
\newtheorem{exam}[theo]{Example}
\newtheorem{rema}[theo]{Remark}
\newtheorem{Thm}{Theorem}
\newtheorem{big-ques}[Thm]{\bf Question}
\newtheorem*{clai-nn}{Claim}
\newtheorem*{theorem*}{Theorem}
\newtheorem*{remark*}{Remark}
\newtheorem*{ext_probl}{Extension Problem}
\newtheorem*{schoencond*}{Sch\"onflies Condition}
\begin{document}

\title[]{On Continuous Extension of Conformal Homeomorphisms of Infinitely Connected Planar Domains 
}

\author[J. Luo]{Jun Luo} \address{School of Mathematics\\
    Sun Yat-Sen University\\ Guangzhou 512075, China}
\email{luojun3@mail.sysu.edu.cn}

\author[X.T. Yao]{Xiaoting Yao} \address{School of Mathematical Sciences\\
    Fudan University\\ Shanghai 200433, China}
\email{yaoxiaoting@fudan.edu.cn\ (corresponding author)}


\begin{abstract}
We consider conformal homeomorphisms $\varphi$ of generalized Jordan domains $U$ onto planar domains $\Omega$ 
that satisfy both of the next two conditions: (1)  at most countably many boundary components of $\Omega$ are non-degenerate and their diameters have a finite sum;  (2) either the degenerate boundary components of $\Omega$ or those of $U$ form a set of sigma-finite linear measure. We prove that  $\varphi$  continuously extends to the closure of $U$ if and only if every boundary component of $\Omega$ is locally connected. This generalizes the Carath\'eodory's Continuity Theorem and leads us to a new generalization of the well known Osgood-Taylor-Carath\'eodory Theorem. There are three issues that are noteworthy. Firstly, none of the above conditions (1) and (2) can be removed.
Secondly, 
our results remain valid for non-cofat domains and do not follow from the extension results, of a similar nature, that are obtained in very recent studies on the conformal rigidity of  circle domains.
Finally, when $\varphi$ does extend continuously to the closure of $U$, the boundary of $\Omega$ is a Peano compactum.
Therefore, we also show that the following properties are equivalent for any planar domain $\Omega$:

(1) The boundary of $\Omega$ is a Peano compactum.

(2) $\Omega$ has Property S.

(3) Every point on the boundary of $\Omega$ is locally accessible.

(4) Every point on the boundary of $\Omega$ is locally sequentially accessible.

(5) $\Omega$ is finitely connected at the boundary.

(6) The completion of $\Omega$ under the Mazurkiewicz distance is compact.
\end{abstract}

\subjclass[2010]{\bf Primary 30D40, 54C20, Secondary 54F15.}
\keywords{\bf  Carath\'eodory's Continuity Theorem, Circle Domain, Generalized Jordan Domain, Peano Compactum.}

\maketitle

\newpage


\section{Introduction}


We consider conformal homeomorphisms $\varphi: U\rightarrow\Omega\subset\widehat{\mathbb{C}}$ of generalized Jordan domains $U$. Such a domain has at most countably many non-degenerate boundary components $\gamma_k(k\ge1)$, each of which is a Jordan curve, such that the diameters of $\gamma_k$ converge to zero. Our aim is to find conditions under which $\varphi$  has a continuous extension $\overline{\varphi}: \overline{U}\rightarrow\overline{\Omega}$.

To start off, we recall some well known results concerning the special case that $U$ is actually a
circle domain, {\em i.e. a connected open subset of $\hat{\mathbb{C}}$ whose complementary components are points or (geometric) disks}.
When $U$ is an open disk, the Carath\'eodory's {\bf Continuity Theorem} has provided the earliest criterion for the existence of $\overline{\varphi}$ \cite{Caratheodory13-a}.  See also \cite[Theorem 3]{Arsove68-a} or \cite[p.  18]{Pom92}.
\begin{Thm}[{\bf Continuity Theorem}]\label{CCT}
A conformal homeomorphism $\varphi:\mathbb{D}\rightarrow \Omega\subset\widehat{\mathbb{C}}$ of the unit disk $\mathbb{D}=\{z: |z|<1\}$ has a continuous extension $\overline{\varphi}: \overline{\mathbb{D}}\rightarrow\overline{\Omega}$  if and only if $\partial\Omega$ is locally connected.
\end{Thm}

\begin{remark*}
Recall that a {\bf Peano continuum} is the image of  $[0,1]$ under a continuous map.
By the Hahn-Mazurkiewicz-Sierp\'inski Theorem \cite[\S50, II, Theorem 2]{Kuratowski68}, a continuum is locally connected if and only if it is a Peano continuum. Therefore,  the {\bf Continuity Theorem} may be restated as follows: a conformal homeomorphism $\varphi:\mathbb{D}\rightarrow \Omega$ of the unit disk $\mathbb{D}$ onto a planar domain $\Omega$ continuously extends to $\overline{\mathbb{D}}$ if and only if $\partial \Omega$ is a Peano continuum.
\end{remark*}

If $\Omega$ in the above theorem is a {\bf Jordan domain}, so that $\partial\Omega$ is a {\bf Jordan curve}, the extension  $\overline{\varphi}: \overline{\mathbb{D}}\rightarrow\overline{\Omega}$  is actually a homeomorphism. This has been obtained earlier by Osgood and Taylor
\cite[Corollary 1]{Osgood-Taylor1913} and independently by Carath\'eodory \cite{Caratheodory13-b}. We refer to it as the Osgood-Taylor-Carath\'eodory Theorem, or shortly the {\bf OTC Theorem}. See for instance \cite[Theorem 4]{Arsove68-a}.
\begin{Thm}[{\bf OTC Theorem}]\label{OTC-0}
A conformal homeomorphism $\varphi:\mathbb{D}\rightarrow \Omega\subset\hat{\mathbb{C}}$ has a continuous and injective extension to  $\overline{\mathbb{D}}$  if and only if the boundary $\partial\Omega$ is a Jordan curve.
\end{Thm}

There are very recent generalizations of the {\bf OTC Theorem}. In order to present how they differ from what we obtain, especially Theorem \ref{OTC-b}, these results are cited as Theorems \ref{OTC-countable} to \ref{OTC-3} in the sequel.
The first one is a direct corollary of \cite[Theorem 3.2]{He-Schramm93}.

\begin{Thm}[{\bf OTC Theorem} {--- for Countably Connected Circle Domains}]\label{OTC-countable}
Every conformal homeomorphism $\varphi: D\rightarrow\Omega$ of a countably connected circle domain $D$ onto a circle domain $\Omega$ extends to be a homeomorphism between $\overline{D}$ and $\overline{\Omega}$.
\end{Thm}

The second one  includes Theorem \ref{OTC-countable} as a special case.
\begin{Thm}[{\bf\cite[Theorem 2.1]{He-Schramm94}}]\label{OTC-1}
Let $D$ be a circle domain in $\hat{\mathbb{C}}$ whose boundary has $\sigma$-finite linear measure. Let $\Omega$ be another circle domain and let  $\varphi: D\rightarrow\Omega$ be a conformal homeomorphism. Then $\varphi$ extends to be a homeomorphism $\overline{\varphi}: \overline{D}\rightarrow\overline{\Omega}$.
\end{Thm}

The next extension result makes no assumption on the linear measure of $\partial D$, although the circle domain $D$ is additionally required to satisfy the so-called {\bf quasihyperbolic condition}. Here the codomain $\Omega$ is also required to be a {\bf cofat} generalized Jordan domain. On the other hand, Theorem \ref{OTC-b} concerns generalized Jordan domains that may or may not be cofat.

\begin{Thm}[{\bf\cite[Theorem 6.1]{Ntalampekos-Younsi20}}]\label{OTC-2}
Let $D$ be a circle domain with $\infty\in D$ and let $h$ be a conformal map from $D$ onto another domain $\Omega$ with $\infty=h(\infty)\in\Omega$. Suppose that $D$  satisfies the quasihyperbolic condition and that the complementary components of $\Omega$ are points and uniformly fat closed Jordan domains. Then $h$ extends to be a homeomorphism from $\overline{D}$ onto $\overline{\Omega}$.
\end{Thm}

The last one, inferred from \cite[Theorem 6.2]{Schramm95}, generalizes Theorem \ref{OTC-0}.
\begin{Thm}[{\bf OTC Theorem} {--- for Cofat Generalized Jordan Domains}]\label{OTC-3}
Let $\varphi: \Omega_1\rightarrow \Omega_2$ be a conformal homeomorphism between generalized Jordan domains that are countably connected.
If $\Omega_1$ and $\Omega_2$ are both  {\bf cofat} then  $\varphi$ extends to a homeomorphism between $\overline{\Omega_1}$ and $\overline{\Omega_2}$.
\end{Thm}

Theorems \ref{OTC-countable} to \ref{OTC-3} provide fundamental generalizations of
the {\bf Continuity Theorem} and the {\bf OTC Theorem}. These results are closely related to the conformal rigidity for circle domains that satisfy certain conditions \cite{He-Schramm93,He-Schramm94,Ntalampekos-Younsi20}.
In the present paper, we deal with the problem of continuous extension for conformal homeomorphisms $\varphi: U\rightarrow\Omega$ from a generalized Jordan domain onto a planar domain. On the one hand, the underlying domains may have uncountably many boundary components. On the other, unlike in Theorems \ref{OTC-countable} to \ref{OTC-3}, we neither assume  $U$ to be a circle domain nor require the codomain $\Omega$ to be {\bf cofat}.
Thus the generalizations we obtain  do not follow from Theorems \ref{OTC-countable} to \ref{OTC-3}.
Here a domain $\Omega\subset\widehat{\mathbb{C}}$ is said to be cofat provided that there exists a number $\tau>0$ such that every component $P$ of $\hat{\mathbb{C}}\setminus\Omega$ is either a point or is {\bf $\tau$-fat}, in the sense that for any point $x$ in $P\cap\mathbb{C}$ the inequality
$\displaystyle \frac{{\rm Area}\left(P\cap D_r(x)\right)}{{\rm Area}\left(D_r(x)\right)}\ge\tau$ holds for any disk $D_r(x)$ centered at $x$ that does not contain $P$.
Clearly, a convex (closed) domain is fat. See the following Figure \ref{fat} for simple fat and non-fat domains.
\begin{figure}[ht]
\begin{tabular}{ccc}
\begin{tikzpicture}[x=0.8cm,y=1.5cm,scale=0.382]

\fill [gray, thick, domain=-4:0,samples=2000] plot (\x, {sqrt(4-(\x+2)*(\x+2))});
\fill [gray, thick, domain=0:4,samples=2000] plot (\x, {sqrt(4-(\x-2)*(\x-2))});
\fill [gray, thick, domain=-4:4,samples=2000] plot (\x, {0.1-sqrt(16-\x*\x)});
\end{tikzpicture}
&

\begin{tikzpicture}[x=1cm,y=1cm,scale=0.764]

\fill[gray] (0,0) circle(2);

\end{tikzpicture}

& \begin{tikzpicture}[x=1cm,y=1cm,scale=0.382]

\fill[gray] (0,0) circle(2.5);
\draw[gray,thin] (-0.6,2.0) arc (345:360:17cm);
\draw[gray,thin] (-0.013,6.2) arc (180:195:17cm);

\draw[gray,thick] (-0.58,1.8) arc (345:360:16.5cm);
\draw[gray,thick] (-0.01,5.7) arc (180:195:17cm);

\fill[gray] (-0.6,1.5) -- (-0.02,5.35) -- (0.55,1) -- (-0.6,1);
\fill[gray] (-0.68,1.8) -- (-0.4,2.8) -- (-0.03,3.8) -- (0.1,3.8) -- (0.6,1.8) -- (-0.6,1.8);
\end{tikzpicture}
\end{tabular}
\caption{A non-convex fat domain, a disk, and a non-fat domain.}\label{fat}
\end{figure}

We will discuss the following.

\begin{ext_probl}
Under what conditions does a conformal homeomorphism  $\varphi:U\rightarrow \Omega\subset\widehat{\mathbb{C}}$ of a circle domain (or generalized Jordan domain) have a continuous extension $\overline{\varphi}: \overline{U}\rightarrow\overline{\Omega}$ ?
\end{ext_probl}

There are basic observations and fundamental examples that shall orient our discussions along proper directions. Firstly,
even if  $\varphi: U\rightarrow\Omega$ is just a homeomorphism, it is routine to check that there is a one-to-one correspondence between the boundary components of $U$ and those of $\Omega$. Following \cite{He-Schramm93}, we denote this correspondence by  $\varphi^B$. Recall that
the non-degenerate boundary components of a generalized Jordan domain $U\subset\hat{\mathbb{C}}$ are Jordan curves whose diameters form a {\bf null sequence}, {\em i.e.} a finite sequence or an infinite one converging to zero. If $\varphi$ does have  a continuous extension $\overline{\varphi}: \overline{U}\rightarrow\overline{\Omega}$,  every component of $\partial\Omega$ is a Peano continuum, possibly a point; moreover, the uniform continuity of $\overline{\varphi}$ requires that the {\bf diameters} of the non-degenerate components of $\partial\Omega$ form a null sequence, too. This indicates that $\partial\Omega$ is a Peano compactum.
Here a  Peano compactum means a compact metrisable space whose components are each a Peano continuum, possibly a single point, such that for any $C>0$ at most finitely many of the components are of diameter $>C$. See \cite[Theorems 1 to 3]{LLY-2019} concerning how this notion  arises from the models discussed in \cite{BCO11,BCO13}. These models have two aims. One is to describe the Julia set $J$ of a rational map $f:\widehat{\mathbb{C}}\rightarrow \widehat{\mathbb{C}}$ from a topological point of view; the other is to analyse the dynamics of the restriction $\left.f\right|_J$.

Accordingly, in the above extension problem we shall add the following.
\\
{\bf(Assumption 1)}. The boundary  $\partial\Omega$ is a Peano compactum.

Secondly, with {\bf Assumption 1} the diameters of the non-degenerate components of $\partial\Omega$ form a null sequence, denoted $\{\delta_n: n\ge1\}$. However, if $\delta_n$ does not converge to zero sufficiently fast (for instance, such that $\sum_n\delta_n=\infty$), then it is possible that $\varphi^B$ maps a point component of $\partial U$ to a non-degenerate component of $\partial\Omega$. See {\bf Example \ref{Bishop_Example}} for the concrete construction of a domain $\Omega\subset\{z: 1<|z|<2\}$ with countably many boundary components, one of which is $\partial\mathbb{D}=\{z: |z|=1\}$.
Due to the advancements made by He and Schramm in \cite{He-Schramm93}, the domain $\Omega$ allows the existence of a conformal homeomorphism $\varphi: D\rightarrow\Omega$ defined on a circle domain $D$.
We will show that for any conformal homeomorphism $\varphi: D\rightarrow\Omega$ of a circle domain $D$ onto $\Omega$ the boundary correspondence $\varphi^B$ maps a point component of $\partial D$ to $\partial\mathbb{D}$. Therefore,  in the above extension problem we also add the following.
\\
{\bf(Assumption 2)}. The diameters of the non-degenerate components of $\partial\Omega$ have a finite sum.

Thirdly, if the point components of $\partial U$ form a set that is too large, say having positive $2$-dimensional Lebesgue measure, then it is possible that $\varphi^B$ maps a point component of $\partial U$ to a non-degenerate component of $\partial\Omega$. See for instance \cite[Theorem 4.1]{Gehring-Martio} for a conformal homeomorphism $\varphi:D\rightarrow D^*$ between circle domains such that the boundary components of $D$ are each a singleton and exactly one component of $\partial D^*$ is a circle.
It is known that there is no continuous extension  $\overline{\varphi}: \overline{D}\rightarrow\overline{D^*}$.
To avoid such situations, and with motivations from discussions in
\cite{He-Schramm94} on the conformal rigidity of circle domains, we further add the following.
\\
{\bf(Assumption 3)}. The point components of $\partial U$ form a set of sigma-finite linear measure.

Now we are well prepared to present our first generalization of the {\bf Continuity Theorem}, to conformal homeomorphisms of a circle domain that may be uncountably connected.
\begin{main-theorem}
[{\bf First Generalization of Continuity Theorem}]
\label{arsove-1}
Let  $\Omega\subset\widehat{\mathbb{C}}$ be a domain satisfying  {\bf Assumptions 1-2}.
Let $D\subset\widehat{\mathbb{C}}$ be a circle domain satisfying {\bf Assumption 3}. Then any conformal homeomorphism $\varphi: D\rightarrow \Omega$ has a continuous extension  $\overline{\varphi}: \overline{D}\rightarrow\overline{\Omega}$.
\end{main-theorem}

A key step in the proof for Theorem \ref{arsove-1} is to find  upper bounds for the oscillations of $\varphi$, in which {\bf Assumption 3} plays a key role. Particularly,
from  \cite[Lemma 1.3]{He-Schramm94} (Lemma \ref{HS-1994_1.3} in this paper) we can infer Lemma \ref{small_level_sets},
which is  then used to construct upper bounds for the oscillations of $\varphi$ as required. See Theorem \ref{oscillation2}.
With the help of  Lemma \ref{Wolff_lemma_2},  we obtain another generalization of the {\bf Continuity Theorem} by replacing {\bf Assumption 3} with the following.
\\
{\bf(Assumption 3*)}. The point components of $\partial\Omega$ form a set of sigma-finite linear measure.

\begin{main-theorem}
[{\bf Second Generalization of Continuity Theorem}]
\label{arsove-2}
Let  $\Omega\subset\widehat{\mathbb{C}}$ be a domain satisfying  {\bf Assumptions 1-2} and {\bf3*}. Then any conformal homeomorphism $\varphi: D\rightarrow \Omega$ of a circle domain $D$ onto $\Omega$ has a continuous extension  $\overline{\varphi}: \overline{D}\rightarrow\overline{\Omega}$,
\end{main-theorem}

Two issues concerning {\bf Theorems \ref{arsove-1}-\ref{arsove-2}} are noteworthy.
Firstly,  {\bf  Theorem \ref{arsove-1}} {\rm(or {\bf\ref{arsove-2}})} actually indicates that, under {\bf Assumptions 2-3} {\rm(or  {\bf 2-3*})}, any conformal homeomorphism $\varphi: D\rightarrow\Omega$ of a circle domain $D$ onto $\Omega$ has a continuous extension $\overline{\varphi}: \overline{D}\rightarrow\overline{\Omega}$ if and only if $\partial\Omega$ is a Peano compactum.
Secondly, we may replace the circle domain $D$ by a {\bf generalized Jordan domain}.
See  Theorems \ref{arsove_sigma-new} and \ref{arsove-new}.
Therefore, we can obtain the following.

\begin{main-theorem}[{\bf Generalized OTC Theorem}]\label{OTC-b}
Let $\Omega_1,\Omega_2$ be generalized Jordan domains. Let $\{P_n\}$ (respectively, $\{Q_n\}$) denote the  non-degenerate  components of $\partial\Omega_1$ (respectively, of $\partial\Omega_2$). Then any conformal homeomorphism $f:\Omega_1\rightarrow\Omega_2$ extends to a continuous map from $\overline{\Omega_1}$ onto $\overline{\Omega_2}$ provided that the following two conditions are both satisfied:
\begin{itemize}
\item[(i)] The inequality $\sum_n{\rm diam}(Q_n)<\infty$ holds for the diameters of $Q_n$.
\item[(ii)] The point components of $\partial \Omega_1$ or those of $\partial\Omega_2$ form a set of $\sigma$-finite linear measure.
\end{itemize}
If, in addition, we have $\sum_n{\rm diam}(P_n)<\infty$ then $f$ extends to a homeomorphism from $\overline{\Omega_1}$ onto $\overline{\Omega_2}$. Consequently, this extension further extends to a homeomorphism of $\widehat{\mathbb{C}}$ onto itself. \end{main-theorem}

\begin{remark*} Theorem \ref{OTC-b} differs from Theorem \ref{OTC-3}.
Firstly,  Theorem \ref{OTC-3} concerns countably connected domains while Theorem \ref{OTC-b} also applies to uncountably connected domains. Secondly, the {\bf cofatness} of $\Omega_1$ and $\Omega_2$ in Theorem \ref{OTC-3} is replaced by two properties: (1) for one of them  the point components of the boundary form a set of {\bf sigma-finite linear measure} and (2) for both of them the {\bf diameters} of the non-degenerate boundary components  have a finite sum.
Theorefore, Theorem \ref{OTC-b} neither implies nor follows from any of Theorems \ref{OTC-countable} to \ref{OTC-3}.
\end{remark*}

In our proofs for {\bf Theorems \ref{arsove-1}} and {\bf\ref{arsove-2}}, we employ a fundamental fact that connects the {\bf Property S} of a planar domain to the topology of its boundary. Here, a metric space $X$ is said to have Property S provided that for each $\epsilon>0$, the set $X$ is the union of finitely many connected sets of diameter less than $\epsilon$ \cite[p.  20]{Whyburn42}.

In Theorem \ref{property_s} we characterize all planar domains with Property S  as those whose boundaries are Peano compacta. This generalizes \cite[p.  112, Theorem (4.2)]{Whyburn42}, from simply connected domains to all planar domains.
There are more criterions, which are summarized in the following.

\begin{main-theorem}\label{topology_metric}
Let $\Omega\subset\hat{\mathbb{C}}$ be a domain. The following are equivalent:

(1) $\partial \Omega$ is a Peano compactum;

(2) $\Omega$ has Property S;

(3) Every point of $\partial \Omega$ is locally accessible;

(4) Every point of $\partial \Omega$ is locally sequentially accessible;

(5) $\Omega$ is finitely connected at the boundary;

(6) The completion of $\Omega$ under the Mazurkiewicz distance $d$ is compact.
\end{main-theorem}

\begin{remark*}
Theorem \ref{topology_metric} demonstrates an interplay between the topology of $\Omega$, that of  $\partial\Omega$, and the completion of the metric space $(\Omega,d)$. For the definition of Mazurkiewicz distance $d$ one may refer to Remark \ref{notions_for_th.1}. For a special case of  Theorem \ref{topology_metric}, when $\Omega$ is simply connected, one may refer to \cite{Herron12}.
Note that Theorem \ref{topology_metric} is also motivated by and provides new generalizations of known criteria for a simply connected planar domain to have Property $S$. See \cite[p.  112, Theorem (4.2)]{Whyburn42}. This theorem will be cited fully in the current paper  as Theorem \ref{whyburn_112}.
Also note that the completion of $(\Omega,d)$ is compact if and only if $\Omega$ is finitely connected at the boundary. See \cite[Theorem 1.1]{BBS16}. The authors of \cite{BBS16} also obtain the equivalences of (2), (4) and (5) for countably connected planar domains $\Omega$  \cite[Theorem 1.2]{BBS16} or for slightly more general choices of $\Omega\subset\hat{\mathbb{C}}$ \cite[Theorem 4.4]{BBS16}. Theorem \ref{topology_metric} improves those earlier results by obtaining the equivalence of all those properties for an arbitrary planar domain $\Omega$.
\end{remark*}

The other parts of the paper are arranged as follows.
In Section \ref{cluster_set} we have two aims.
One is to obtain Theorem \ref{characterizing_GJD}, which  characterizes all generalized Jordan domains $U$, in terms of (1) the simple connectedness of $U$ at the boundary and (2) the connectedness of all cluster sets for any continuous map from $U$ to a compact metric space.
The other is to obtain  Theorem \ref{topology_metric}.

In
Section \ref{proof_1/2} we give topological counterparts for {\bf Theorems \ref{arsove-1}-\ref{OTC-b}}. See Theorem \ref{topological-cct}. Here we also analyze the monotonicity and injectivity of the continuous extension $\overline{\varphi}: \overline{U}\rightarrow\overline{\Omega}$ of a homeomorphism $\varphi$ from a generalized Jordan domain onto a domain $\Omega\subset\widehat{\mathbb{C}}$, if it exists. In Theorem \ref{monotone_bd_map}, we show that  $\overline{\varphi}$ is monotone if and only if $\Omega$ is also a generalized Jordan domain.

In Section \ref{outline-cct} we will prove Theorems  \ref{arsove-1} to \ref{OTC-b}.  These proofs  are both based on certain estimates from above of the oscillations of some conformal homeomorphism $\varphi: U\rightarrow\Omega$, from a generalized Jordan domain $U$ onto a domain $\Omega\subset\widehat{\mathbb{C}}$, so that Theorem \ref{topological-cct} may be applied.

In Section \ref{Example} we concretely construct a countably connected domain $\Omega\subset\mathbb{C}$ such that there are conformal homeomorphisms $\varphi$ from a circle domain $D$ onto $\Omega$ and the boundary map $\varphi^B$ sends a point component of $\partial D$ to a non-degenerate component of $\partial\Omega$. See Example \ref{Bishop_Example}. Moreover, we may construct this domain $\Omega$ as a generalized Jordan domain. See Remark \ref{Bishop_Example-b}.

\section{Theory of Cluster Sets for Generalized Jordan Domains}\label{cluster_set}
This section has two aims. One is to characterize generalized Jordan domains in terms of the connectedness of cluster sets. The other is to find necessary and sufficient conditions for a planar domains to have property $S$, that is, to prove Theorem \ref{topology_metric}.
Here we recall that the boundary components of a generalized Jordan domain are points or Jordan curves and that there are at most countably many Jordan curves, whose diameters converge to zero.
Moreover, for any continuous map $f: U\rightarrow X$ of a planar domain $U$ to a compact metric space $X$,  the {\bf cluster set} at $z_0\in\partial U$, denoted as  $C(f,z_0)$, consists of all $w$ such that there exists an infinite sequence $z_k\in U$ satisfying $z_k\rightarrow z_0$ and $f(z_k)\rightarrow w$. In other words, we have
\[C(f,z_0):=\bigcap_{r>0}\overline{f(D_r(z_0)\cap U)}.\]
Clearly, every cluster set is nonempty and closed, thus is also a compactum. For the basics of cluster sets, one may refer to \cite{CL66}.

To this end, we shall obtain  as well as the following.
\begin{theo}\label{characterizing_GJD}
The following statements are equivalent for any domain $U\subset\widehat{\mathbb{C}}$:
(1) $U$ is a generalized Jordan domain, (2) $U$ is simply connected at the boundary, (3) for any continuous map $f$ of $U$ into a compact metric space $X$ all cluster sets $C(f,z)$ with $z\in\partial U$ are connected.
\end{theo}

The property of being {\bf simply connected at the boundary}, to be defined as follows, is a special sub-case for the property of being finitely connected at the boundary, which is to be discussed slightly later in Remark \ref{notions_for_th.1}, before we start to prove Theorem \ref{topology_metric}.
\begin{deff}\label{simply_connected}
A domain $U\subset\widehat{\mathbb{C}}$ is said to be {\bf simply connected at a point $z_0\in\partial U$} provided that for any disk $D_r(z_0)$ centered at $z_0$  with radius $r>0$ there is a set $N_r$ open in $\hat{\mathbb{C}}$ with $z_0\in N_r\subset D_r(z_0)$ such that $N_r\cap U$ is connected.
\end{deff}

Note that for any continuous map $f: U\rightarrow X$ of a  domain $U\subset\widehat{\mathbb{C}}$ into a compact metric space, if $U$ is simply connected at $z_0\in\partial U$ then the cluster set $C(f,z_0)$ is equal to  $\displaystyle\bigcap_{r>0}\overline{f(N_r\cap U)}$, where $N_r$ is chosen  as in Definition  \ref{simply_connected}. So the implication $(2)\Rightarrow(3)$ in Theorem \ref{characterizing_GJD} is immediate. Therefore, we only need to consider the implications $(1)\Rightarrow(2)$ and $(3)\Rightarrow(1)$, which are respectively done in Theorems \ref{connected_cluster} and \ref{connected_cluster_GJD}.

\begin{theo}\label{connected_cluster}
Each generalized Jordan domain is simply connected at the boundary.
\end{theo}

\begin{theo}\label{connected_cluster_GJD}
If a domain $U\subset\widehat{\mathbb{C}}$ is not a generalized Jordan domain, there is a continuous map $f$ of $U$ to a compact metric space $X$ such that $C(f,z_0)$  is disconnected for some   $z_0\in\partial U$.
\end{theo}

We divide the rest of this section into three parts. Firstly, we obtain an intrinsic connection between a metric property of a planar domain $\Omega$, the so-called ``Property S'', and the topology of its boundary $\partial\Omega$. See Theorem \ref{property_s}.  Secondly, with the help of this connection, we continue to prove Theorems \ref{connected_cluster} to \ref{connected_cluster_GJD}. Finally, we prove Theorem  \ref{topology_metric}.

\subsection{Property $S$ and Peano Compactum}\label{s-pc}

For planar domains, Property $S$ and the property of having a Peano compactum as its boundary are closely connected. A special case of this connection is already known. Actually, by \cite[p.  112, Theorem (4.2)]{Whyburn42}, we have.
\begin{theo}\label{whyburn_112}
If $\Omega\subset\hat{\mathbb{C}}$ is a domain whose boundary is a
continuum
 the following are equivalent:
\begin{itemize}
\item[(i)] $\Omega$ has Property $S$,
\item[(ii)] every point of $\partial\Omega$ is locally accessible from $\Omega$,
\item[(iii)] every point of $\partial\Omega$ is accessible from all sides from $\Omega$,
\item[(iv)] $\partial\Omega$ is locally connected, or equivalently, a Peano continuum.
\end{itemize}
\end{theo}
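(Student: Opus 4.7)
The plan is to prove the theorem by running the cyclic chain (i) $\Rightarrow$ (iv) $\Rightarrow$ (ii) $\Rightarrow$ (iii) $\Rightarrow$ (i), noting that (ii) $\Rightarrow$ (iii) is immediate because regular accessibility trivially implies accessibility from all sides, so really the work is concentrated in three steps.

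For (i) $\Rightarrow$ (iv), I would first show that property $S$ of $\Omega$ forces $\overline{\Omega}$ to be locally connected. Given $p\in\overline{\Omega}$ and $\epsilon>0$, a property-$S$ decomposition of $\Omega$ into finitely many connected sets of diameter less than $\epsilon/3$ induces a decomposition of $\overline{\Omega}$ into finitely many closed connected pieces of diameter at most $\epsilon/3$; the union of those pieces that meet the ball $B(p,\epsilon/3)$, joined together through $p$, provides a connected neighborhood of $p$ in $\overline{\Omega}$ of diameter less than $\epsilon$. To pass from $\overline{\Omega}$ to $\partial\Omega$ I would invoke that $\partial\Omega$ is a continuum and combine the above small connected neighborhood of $p$ with a standard boundary-bumping argument that extracts a subcontinuum of $\partial\Omega$ through $p$ lying inside it, giving local connectedness of $\partial\Omega$ at $p$ and hence (iv).

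For (iv) $\Rightarrow$ (ii), since $\partial\Omega$ is a Peano continuum, each $p\in\partial\Omega$ admits a base of arbitrarily small connected relative neighborhoods $V_n\subset\partial\Omega$. Exploiting that $\Omega$ is a planar region, I would inductively build an arc $\gamma\subset\Omega\cup\{p\}$ ending at $p$ whose tail past $\gamma(1-1/n)$ lies inside a prescribed small neighborhood of $V_n$; letting $n\to\infty$ yields the end-segment of $\gamma$ inside arbitrarily small neighborhoods of $p$, which is regular accessibility.

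The remaining implication (iii) $\Rightarrow$ (i) is the main obstacle. I would argue by contrapositive: if property $S$ fails, then some $\epsilon_0>0$ witnesses the failure in the sense that no finite cover of $\Omega$ by connected subsets of diameter $<\epsilon_0$ exists. A compactness argument, combined with a careful cross-cut construction inside $\Omega$, would then produce a boundary point $p$ together with two sequences $(x_n),(y_n)\subset\Omega$ converging to $p$ from opposite sides of a sequence of cross-cuts, yet unable to be joined by arcs in $\Omega\cup\{p\}$ of diameter comparable to their distance to $p$; this contradicts the hypothesis that $p$ is accessible from all sides in $\Omega$. The difficulty lies precisely in upgrading the non-quantitative pointwise accessibility hypothesis into a global uniform finite-decomposition property, and my plan is to follow Whyburn's philosophy of exploiting the planarity of $\Omega$ and the structure of its complementary domains to make the cross-cut construction effective.
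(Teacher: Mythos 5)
The paper itself does not prove this statement: it is quoted wholly from Whyburn's \emph{Analytic Topology} (p.~112, Theorem~(4.2)) and used only as motivation, the paper's own contribution being the generalization in Theorem~\ref{property_s}, which is proved by a contrapositive cross-cut argument through the Sch\"onflies condition and relation. So your proposal must stand on its own, and its central step fails. In (i)~$\Rightarrow$~(iv) you deduce from property $S$ that $\overline{\Omega}$ is locally connected (correct and standard) and then try to transfer local connectedness from $\overline{\Omega}$ to $\partial\Omega$ by boundary bumping. That transfer is false. Place a closed topologist's sine curve $T$ inside $\bbD$ so that it meets $\partial\bbD$ at one endpoint; $T$ is chainable, hence does not separate the plane, so $\Omega=\bbD\setminus T$ is a region with $\partial\Omega=\partial\bbD\cup T$ a continuum that is \emph{not} locally connected along the limit segment of $T$, while $\overline{\Omega}=\overline{\bbD}$ is locally connected. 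Your argument uses nothing about $\Omega$ beyond local connectedness of $\overline{\Omega}$, so it cannot distinguish this situation from a good one. Boundary bumping only produces a nondegenerate subcontinuum of $\partial\Omega$ through $p$ inside a small ball; local connectedness at $p$ requires that \emph{all} nearby boundary points be joined to $p$ by small connected subsets of $\partial\Omega$, which is a strictly stronger conclusion. A correct proof must exploit property $S$ of $\Omega$ directly, as in the strip/annulus argument of Theorem~\ref{property_s}.

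The remaining two substantive implications are also not established. In (iv)~$\Rightarrow$~(ii) you build a single arc landing at $p$, which at best gives accessibility; \emph{regular} accessibility demands a uniform $\delta(\epsilon)$ such that every $x\in\Omega$ with $|x-p|<\delta$ is joined to $p$ by an arc in $\Omega\cup\{p\}$ of diameter less than $\epsilon$, and nothing in your sketch produces that uniformity. And (iii)~$\Rightarrow$~(i), which you yourself flag as the main obstacle, is left as a plan rather than an argument; upgrading the pointwise, non-quantitative hypothesis of accessibility from all sides to the global finite-decomposition property is exactly where Whyburn's proof does its real work, and no mechanism for that upgrade is supplied. As written, none of the three nontrivial implications is proved.
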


Here a metric space $X$ is said to have {\bf Property S} provided that for each $\epsilon>0$ the set $X$ is the union of finitely many connected sets of diameter less than $\epsilon$ \cite[p.  20]{Whyburn42}.
Moreover, a point $p\in\partial\Omega$ is said to be {\bf locally accessible from $\Omega$} provided that for any $\epsilon>0$ there is a number $\delta>0$ such that for any $x\in\Omega$ with $|x-p|<\delta$ one can find a simple arc $\overline{xp}\subset \Omega\cup\{p\}$ that joins $x$ to $p$ and has a diameter smaller than $\epsilon$ \cite{Arsove67}. Also note that if a point $x\in\partial\Omega$ is locally accessible it is also said to be {\bf regularly accessible} \cite[p.  111]{Whyburn42} .

\begin{theo}\label{property_s}
A domain $\Omega\subset\hat{\mathbb{C}}$ has Property $S$ if and only if $\partial\Omega$ is a Peano compactum.
\end{theo}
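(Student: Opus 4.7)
The plan is to prove the two implications separately, in each direction reducing to Theorem~\ref{whyburn_112}, which handles the case when the entire boundary is a continuum. I work on the sphere throughout so that $\overline{\Omega}$ is compact.

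For the $(\Rightarrow)$ direction, assume $\Omega$ has Property~$S$ and verify the two defining conditions of a Peano compactum for $\partial\Omega$. For the finiteness of large components, suppose for contradiction that infinitely many components $P_1,P_2,\ldots$ of $\partial\Omega$ satisfy $\diam(P_m)\geq C$; choose $x_m,y_m\in P_m$ with $|x_m-y_m|\geq C/2$ and extract limits $x_m\to x$, $y_m\to y$. Apply Property~$S$ at scale $\epsilon<C/4$ to write $\Omega=A_1\cup\cdots\cup A_n$ with each $A_i$ connected of diameter $<\epsilon$; the closures $\overline{A_i}$ cover $\overline{\Omega}$. Each large $P_m$ must then traverse a connected chain of at least $\lceil C/\epsilon\rceil$ distinct $\overline{A_i}$, and the accumulation of infinitely many pairwise disjoint such $P_m$ near $x$ and $y$ forces a cross-cut style contradiction with the finite decomposition. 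For the local connectedness of each component $P$ of $\partial\Omega$, let $V$ be the component of $\hat{\bbC}\setminus P$ containing $\Omega$; then $\partial V=P$ (standard on the sphere since $P$ is a continuum), and $V$ inherits Property~$S$ from $\Omega$ by absorbing the finitely many large additional pieces of $V\setminus\Omega$ — other components of $\partial\Omega$ inside $V$ together with the bounded domains they enclose — into enlarged Property~$S$ pieces. Theorem~\ref{whyburn_112}(iv) applied to $V$ then yields that $P$ is a Peano continuum.

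For the $(\Leftarrow)$ direction, assume $\partial\Omega$ is a Peano compactum and fix $\epsilon>0$. Let $P_1,\ldots,P_k$ be the finitely many components of $\partial\Omega$ of diameter $\geq\epsilon/4$, and for each $P_i$ let $V_i$ be the component of $\hat{\bbC}\setminus P_i$ containing $\Omega$. Since $\partial V_i=P_i$ is a Peano continuum, Theorem~\ref{whyburn_112}(i) yields that $V_i$ has Property~$S$; a careful refinement that avoids the finitely many other large components of $\partial\Omega$ inside $V_i$ then produces a finite connected cover of $\Omega\cap V_i$ by pieces of diameter $<\epsilon$. Outside $\bigcup_i V_i$, the domain $\Omega$ meets only small components of $\partial\Omega$; compactness of $\overline{\Omega}$ together with the fact that every such small component has diameter $<\epsilon/4$ yields a finite cover of the remainder of $\Omega$ by small connected open pieces. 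Gluing gives the required Property~$S$ decomposition.

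The main obstacle is the finiteness-of-large-components step in the $(\Rightarrow)$ direction: the cross-cut argument must carefully exploit the planar topology to rule out accumulation of pairwise disjoint continua of common diameter lower bound. A companion difficulty, needed both to complete $(\Rightarrow)$ and to preserve connectedness of pieces in $(\Leftarrow)$, is controlling the interaction between a large boundary component $P$ and the \emph{dust} of small components inside the auxiliary region $V$; this is where the Peano-compactum hypothesis — demanding not only local connectedness of each component but also diameter control on their ensemble — enters essentially.
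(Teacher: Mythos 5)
Your strategy --- verify the two defining properties of a Peano compactum directly and reduce the local connectedness of each boundary component to Theorem~\ref{whyburn_112} via the auxiliary domain $V$ --- is genuinely different from the paper's, which never touches Whyburn's theorem and instead runs both directions as contrapositives through the Sch\"onflies condition and the Sch\"onflies relation of \cite{LLY-2019} (Definitions~\ref{Schonflies_condition} and~\ref{Schonflies_relation}, Theorems~3 and~7 there). Unfortunately, the two places where your outline says ``cross-cut style contradiction'' and ``compactness \ldots yields a finite cover'' are exactly the places where the real work lives, and in both cases you have named the obstacle rather than overcome it. For the finiteness of large components in $(\Rightarrow)$, the chain-counting observation proves nothing by itself: distinct large components $P_m$ may all meet the same few sets $\overline{A_i}$, since a single connected piece of $\Omega$ of diameter $<\epsilon$ can have infinitely many boundary components in its closure. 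What is actually needed is to produce, from the accumulating disjoint continua $P_m$, infinitely many points \emph{of $\Omega$} that are pairwise separated inside a fixed strip or annulus by continua in $\hat{\bbC}\setminus\Omega$; this is the content of the paper's construction of the separating continua $M_n$ between consecutive cross-cuts $\alpha_n$, and it is not a routine step.

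The gap in $(\Leftarrow)$ is more serious. Since $\Omega\subset V_i$ for every $i$, the phrase ``outside $\bigcup_i V_i$'' describes the empty set, and the intended gluing cannot work as stated: a connected piece $B$ of a Property~$S$ decomposition of $V_i$ (obtained from Theorem~\ref{whyburn_112}) may meet $\Omega$ in \emph{infinitely many} components, and Property~$S$ requires a finite cover. Ruling this out is precisely the hard point: one must show that for a round annulus $A$ centred at a boundary point, $A\setminus\partial\Omega$ cannot have infinitely many components crossing $A$. The diameter control kills crossings by infinitely many distinct components of $\partial\Omega$, but a \emph{single} large Peano continuum could a priori cross $A$ in infinitely many pieces; excluding that requires uniform local connectedness of Peano continua (or, as the paper does, the equivalence of ``Peano compactum'' with triviality of the Sch\"onflies relation, \cite[Theorem~7]{LLY-2019}). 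Your sketch invokes neither. Until you either prove or cite a statement of the form ``if $\partial\Omega$ is a Peano compactum, then for every annulus $A$ only finitely many components of $A\setminus\partial\Omega$ cross $A$,'' the $(\Leftarrow)$ direction is not established; with that statement in hand, the paper's short argument (disjoint arcs $\beta_i$ crossing an annulus in distinct components of $A\setminus\partial\Omega$) finishes the job more directly than the proposed reduction to Theorem~\ref{whyburn_112}.
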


Theorem \ref{property_s} is motivated by and also provides a partial generalization for Theorem \ref{whyburn_112}. Here we remove the requirement that $\partial\Omega$ be a continuum and keep  items (ii) and (iii) untouched.
In order to prove Theorem \ref{property_s}, we will use a  notion introduced in \cite{LLY-2019}, the {\bf Sch\"onflies condition} for planar compacta.
\begin{deff}\label{Schonflies_condition}
A compactum $K\subset\mathbb{C}$ is said to satisfy the {\bf Sch\"onflies condition} provided that for the closed strip $W=W(L_1,L_2)$ bounded by two arbitrary parallel lines $L_1$ and $L_2$, the {\bf difference} $W\setminus K$ has at most finitely many components intersecting $L_1$ and $L_2$ both.
\end{deff}

The above notion, and related results, has motivations from recently developed topological models that are very helpful in the study of polynomial Julia sets \cite{BCO11,BCO13,Curry10,Kiwi04}. These models also date back to the 1980's, when Thurston and Douady and their colleagues started applying the {\bf Carath\'eodory's Continuity Theorem} to the study of polynomial Julia sets, which are assumed to be connected and locally connected. See for instance \cite{Douady93} and \cite{Thurston09}.

Notice that  a compact set $K\subset\mathbb{C}$ satisfies the Sch\"onflies condition if and only if it is a Peano compactum \cite[Theorem 3]{LLY-2019}.
This happens if and only if the {\bf intersection} $W\cap K$ has at most finitely many components intersecting $L_1$ and $L_2$ both, with $W=W(L_1,L_2)$ defined in the same way. See \cite[Lemma 3.8(3)]{LLY-2019}. Also note that the Sch\"onflies condition involves parallel lines, so we stay on the complex plane $\mathbb{C}$, not on the extended complex plane $\hat{\mathbb{C}}$. On the other hand,  we can obtain a similar criterion by replacing $W$ with a closed annulus determined by two arbitrary disjoint Jordan curves $J_1,J_2\subset\hat{\mathbb{C}}$ which is the closure of $U=U(J_1,J_2)$,  the component of $\hat{\mathbb{C}}\setminus(J_1\cup J_2)$ with $\partial U=J_1 \cup J_2$. Such a criterion then works for all compacta lying on $\hat{\mathbb{C}}$.  See for instance \cite[Remark 3.9]{LLY-2019}. In other words, we shall have the following lemma, which is also a direct corollary of \cite[Theorem 7]{LLY-2019}.

\begin{lemm}
\label{R_K}
A compact set $K\subset\hat{\mathbb{C}}$ is a Peano compactum if and only if for any two disjoint Jordan curves $J_1,J_2$ either of the next two conditions is satisfied: (1)  $\overline{U(J_1,J_2)}\setminus K$ has at most finitely many components that intersect each of $J_1$ and $J_2$ both; (2)  $\overline{U(J_1,J_2)}\bigcap K$ has at most finitely many components that intersect each of $J_1$ and $J_2$ both.
\end{lemm}

\begin{proof}[{\bf Proof for Theorem \ref{property_s}}] Let us start from a proof for the ``only if'' part. With no loss of generality, let us assume $\infty\in\Omega$ and fix a closed disk $D_0$ centered at $\infty$ and lying in $\Omega$. Then $\Omega$ as a domain in $\hat{\mathbb{C}}$ has Property $S$ if and only if $\Omega_1=\Omega\setminus D_0$ as a domain in $\mathbb{C}$ has; moreover, $\partial\Omega$ is a Peano compactum if and only if $\partial\Omega_1$ is.

If $\partial\Omega$ is not a Peano compactum, then neither is $\partial\Omega_1$. So there are two parallel lines $L_1,L_2$ such that for the {\bf closed strip} $W=W(L_1,L_2)$ bounded by $L_1$ and $L_2$, {\em i.e.} the closure of the component of $\mathbb{C}\setminus(L_1\cup L_2)$ whose boundary  is $L_1\cup L_2$, the {\bf difference} $W\setminus \partial\Omega_1$ has infinitely many components intersecting both $L_1$ and $L_2$. Denote these components as $W_1,W_2,\ldots$.

All those $W_n$ are locally compact and locally connected. So each of them is arcwise connected \cite[p.  38, (5.2)]{Whyburn42}. Thus we may choose open arcs $\alpha_n\subset W_n$ joining a point $a_n$ on $W_n\cap L_1$ to a point $b_n$ on $W_n\cap L_2$. Here an open arc means a simple arc without the two end points. Renaming the arcs $\alpha_n$ and going to an appropriate subsequence, if necessary, we may assume that for any $n>1$ the two arcs $\alpha_{n-1}$ and $\alpha_{n+1}$ lie in different components of $W\setminus\alpha_n$. It is clear that the arcs $\alpha_n$ may be arranged inside $W$ {\bf linearly} from left to right.  See Figure \ref{long-band} for a simplified depiction of this arrangement.
\begin{figure}[ht]
\begin{center}
\vskip -0.5cm
\begin{tikzpicture}[scale=0.618,x=1cm,y=0.75cm]

\foreach \i in {1,...,50}
{
\draw[line width=2 pt ,color=gray!20](3+3*\i/50,5) -- (2+3*\i/50,3) -- (3.5+3*\i/50,1.5) -- (3+3*\i/50,0);
\draw[line width=2 pt ,color=gray!20](11+3*\i/50,5) -- (10+3*\i/50,3) -- (11.5+3*\i/50,1.5) -- (11+3*\i/50,0);
}

\draw  (-2,0)--(22,0);
\draw  (-2,5)--(22,5);
\draw[line width=2 pt ,color=gray](3,5) -- (2,3) -- (3.5,1.5) -- (3,0);
\draw[line width=2 pt ,color=gray] (6,5) -- (5,3) -- (6.5,1.5) -- (6,0);
\draw[line width=2 pt ,color=gray] (11,5) -- (10,3) -- (11.5,1.5) -- (11,0);
\draw[line width=2 pt ,color=gray] (14,5) -- (13,3) -- (14.5,1.5) -- (14,0);

\node at (22.75,5) {$L_1$}; \node at (22.75,0) {$L_2$};
\node at (1.4,2.9) {$\alpha_1$};  \node at (3.75,2.9) {$D_1$}; \node at (11.75,2.9) {$D_n$};
\node at (6.0,2.9) {$\alpha_2$};
\node at (9.4,2.9) {$\alpha_n$};
\node at (14.5,2.9) {$\alpha_{n+1}$};
\node at (11,5.5) {$a_n$};  \node at (11,-0.5) {$b_n$};
\node at (14.5,5.5) {$a_{n+1}$};  \node at (14.5,-0.5) {$b_{n+1}$};
\node at (9.0,1.4) {\Large$\cdots\cdots$};
\node at (16.5,1.4) {\Large$\cdots\cdots$};

\draw [fill=black] (11,0) circle (0.1);
\draw [fill=black] (11,5) circle (0.1);
\draw [fill=black] (14,0) circle (0.1);
\draw [fill=black] (14,5) circle (0.1);
\draw [fill=black] (6,0) circle (0.1);
\draw [fill=black] (6,5) circle (0.1);
\draw [fill=black] (3,0) circle (0.1);
\draw [fill=black] (3,5) circle (0.1);
\end{tikzpicture}
\end{center}
\vskip -0.618cm
\caption{The arcs $\alpha_n$ and the Jordan domains $D_n$.}\label{long-band}
\vskip -0.5cm
\end{figure}

Now, let $D_n (n\ge1)$ be the bounded component of $\mathbb{C}\setminus(L_1\cup L_2\cup \alpha_n\cup\alpha_{n+1})$. Then each $D_n$ is a Jordan domain; moreover, the closure $\overline{D_n}$ contains a continuum $M_n\subset\partial\Omega_1$ that separates $\alpha_n$ from $\alpha_{n+1}$ in $\overline{D_n}$ \cite[lemma 3.6]{LLY-2019}. Such a continuum $M_n$ must intersect both $L_1$ and $L_2$. So we can
choose $x_n\in M_{2n-1}$ for all $n\ge1$ with
${\rm dist}(x_n,L_1)={\rm dist}(x_n,L_2)=\frac12\text{\rm dist}(L_1,L_2)$. Let $\epsilon>0$ be a number smaller than $\frac14\text{\rm dist}(L_1,L_2)$. Since $x_n\in \left(D_{2n-1}\cap\partial\Omega_1\right)$ we may find a point $y_n\in \left(D_{2n-1}\cap\Omega_1\right)$ satisfying $|x_n-y_n|<\epsilon$. Clearly, for any $m,n\ge1$ the two points $y_n, y_{n+m}\in \Omega_1$ are separated in $\overline{W}$ by $M_{2n}$, which lies in $\overline{D_{2n}}\cap\partial\Omega_1$. In other words,  we have obtained an infinite set $\{y_n\}$ of points in $\Omega_1$, no two of which may be contained in a single connected subset of $\Omega_1$ that has diameter $<\epsilon$. In other words,  $\Omega_1$ and hence $\Omega$ does not have Property $S$.

In the sequel, we prove the ``if'' part. Suppose that $\Omega$ does not have Property $S$. Then we can find $\displaystyle\epsilon\in\left(0,\frac12{\rm diam}(\Omega)\right)$, such that $\Omega$ is not the union of finitely many connected subsets of diameter $\le4\epsilon$. Let $x_1,x_2\in\Omega$ be two points with $|x_1-x_2|>2\epsilon$. Then $A_2=\{x_1,x_2\}$ has  the next property (*): {\em no pair of its points is contained in a single connected subset of $\Omega$ with diameter  $\le2\epsilon$}. Once $A_n=\{x_1,\ldots,x_n\}$ has been chosen so that it has the property (*), we claim that there always exists a further point $x_{n+1}$ in $\Omega\setminus A_n$ such that $A_{n+1}=A_n\cup\{x_{n+1}\}$ has the property (*). If not, for every $x\in\Omega$ there would be a connected set $\Omega_x$, of diameter  $\le2\epsilon$, that contains $x$ and some $x_i\in A_n$.  Let $E_i$ consist of all those $\Omega_x$ that contains $x_i$. Then $E_1,\ldots,E_n$ are connected sets of diameter  $\le4\epsilon$ and their union covers $\Omega$. This is a contradiction.

The above procedure, to choose $x_n$ for all $n\ge3$, may be repeated indefinitely. In this way, we can  find infinitely many points $x_1,x_2,\ldots\in\Omega$ such that no pair of them is contained in a single connected subset of $\Omega$ having diameter  $\le2\epsilon$. By the compactness of  $\overline{\Omega}$ and by going to an appropriate subsequence, if necessary, we may assume that $\lim\limits_{i\rightarrow\infty} x_i=x$. The way we choose the points $x_i$ then implies that $x\in\partial\Omega$.

Given $r\in(0,\epsilon)$, choose $i_0\ge1$ such that $x_i\in D_r(x)$ for all $i\ge i_0$. Fix a point $x_0\in \Omega$ with $|x-x_0|>\epsilon$ and choose arcs $\alpha_i\subset \Omega$ starting from $x_0$ and ending at $x_i$. For any $i\ge i_0$ let $a_i\in\alpha_i$ be the last point at which $\alpha_i$ leaves $\partial D_\epsilon(x)$; let $b_i\in\alpha_i$ be the first point after $a_i$ at which $\alpha_i$ encounters $\partial D_r(x)$.
\begin{figure}[ht]
\begin{center}
\vskip -0.382cm
\begin{tikzpicture}[x=1cm,y=1cm,scale=0.5]
\draw[thick]  (0,0) circle (5);
\draw[thick]  (0,0) circle (2.5);
\node at (6.35,0) {$\partial D_\epsilon(x)$};
\node at (0.75,-3.00) {$\partial D_r(x)$};
\node at (0.75,0) {$x$};
\draw  [fill,black](0,0)circle (0.2);
\node at (-6.75,0) {$x_0$};
\draw  [fill,gray](-6,0)circle (0.2);

\node at (0.8,-1.2) {$x_i$};
\draw  [fill,gray](0,-1.2)circle (0.2);

\draw  [fill,gray](-1.5,-2.0)circle (0.2);
\draw  [fill,gray](-4,-3.0)circle (0.2);
\node at (-4.5,-3.5) {$a_i$};
\node at (-1.35,-1.1) {$b_i$};
\draw[line width=2 pt ,color=gray](-4,-3) -- (-2.75,-3) -- (-1.5,-2);
\node at (-2.25,-3.5) {$\beta_i$};
\end{tikzpicture}
\end{center}
\vskip -0.5cm
\caption{The relative locations for $x_i, a_i, b_i$ and the arcs $\beta_i$. 
}\label{annulus_puzzle}
\end{figure}
See Figure \ref{annulus_puzzle}.  Now let $\beta_i\subset\alpha_i$ be the sub-arc from $a_i$ to $b_i$ and $\gamma_i\subset\alpha_i$ the one from $b_i$ to $x_i$. Since no pair of the points $x_i$ is contained in a single connected subset of $\Omega$ that is of diameter $\le2\epsilon$, we see that all those arcs $\{\beta_i: \ i\ge i_0\}$ are disjoint. Moreover, we can further infer that no two of them may be contained in the same component of $A\setminus\partial\Omega$, where $A$ is the {\bf closed annulus} with boundary circles $\partial D_r(x)$ and $\partial D_\epsilon(x)$. Indeed, if this happens for $\beta_k, \beta_j$ with $k\ne j\ge i_0$ then $\beta_k\cup\beta_j$ lies in a component $P$ of $A\setminus \partial\Omega$, which is a subset of $\Omega$. In such a case  $\gamma_k\cup\beta_k\cup P\cup\beta_j\cup\gamma_j$ is a connected subset of $\Omega$ containing $\{x_k, x_j\}$ that lies in $D_\epsilon(x)$ and hence has a diameter $\le 2\epsilon$. This is prohibited, by the choices of $\{x_i\}$.

Now, let $P_i (i\ge i_0)$ be the component of $A\setminus \partial\Omega$ that contains $\beta_i$. Then $P_i\cap P_j=\emptyset$ for all $i\ne j\ge i_0$ and $A\setminus\partial\Omega$ has infinitely many components that intersect $\partial D_r(x)$ and $\partial D_\epsilon(x)$ both. By Lemma \ref{R_K}, this indicates that $\partial\Omega$ is not a Peano compactum.
\end{proof}

\subsection{Proofs for Theorems \ref{connected_cluster} and \ref{connected_cluster_GJD}}
We start from Theorem \ref{connected_cluster}.

\begin{proof} [{\bf Proof for Theorem \ref{connected_cluster}}] Since a Jordan curve separates $\hat{\mathbb{C}}$ into two domains,  $\partial U$ has at most countably many components that are Jordan curves. Denote these Jordan curves as $\{\Gamma_n\}$.

We shall need {\bf Zoretti's Theorem} \cite[p.  35, Corollary 3.11]{Whyburn64}.

\begin{theorem*}[\bf Zoretti's Theorem]\label{Zoretti}
If $K$ is a component of a compact set $M$ (in the plane) and $\epsilon$ is any positive number, then there exists a simple closed curve $J$ which encloses $K$ and is such that $J\cap M=\emptyset$, and every point of $J$ is at a distance less than $\epsilon$ from some point of $K$.
\end{theorem*}

If $\{z_0\}$ is a point component of $\partial U$, by {\bf Zoretti's Theorem}, we can choose an infinite sequence of Jordan curves $J_i\subset U$ with $i\ge1$ such that the component of $\widehat{\mathbb{C}}\setminus J_i$ containing $z_0$, denoted as $W_i$, also contains $J_{i+1}$. By \cite[p. 117, Theorem 11.2]{Newman} we see that $W_i\cap U$ is connected. This indicates that $U$ is simply connected at $z_0\in\partial U$. In the sequel, we only consider the case that $z_0\in\Gamma_p$ for some $p\ge1$. By the Sch\"onflies Theorem \cite[p.  72, Theorem 4]{Moise}, we may assume that $\Gamma_p=\{|z|=1\}$  and  $U\subset \mathbb{D}^*:=\{|z|>1\}\subset\hat{\mathbb{C}}$.

Given an open subset $V_0$ of $\hat{\mathbb{C}}$ that contains $z_0$, we fix a closed disk $D\subset V_0$ centered at $z_0$.  As $U$ is a generalized Jordan domain, the boundary $\partial U$ is a Peano compactum. By Theorem \ref{property_s}, we see that $U$ has property $S$. Therefore, we may find finitely many domains $M_n(1\le n\le N)$,  of diameter less than ${\rm dist}\left(D,\hat{\mathbb{C}}\setminus V_0\right)$, such that $\bigcup_nM_n=U$ and every $M_n$  has Property $S$. This is possible by \cite[p.  21, Corollary (15.41)]{Whyburn42}, which reads as follows.

\begin{theorem*}
Any metric space having Property $S$ is the sum of a finite number of arbitrarily small connected subsets each having Property $S$. Furthermore these subsets may be chosen either as open sets or as closed sets.
\end{theorem*}

Let $W$ be the union of all those $M_n$ with $z_0\in\overline{M_n}$. Renaming the domains $M_n$, we may find an integer $N_0$ such that $z_0\in\overline{M_n}$ if and only if $1\le n\le N_0$. Therefore, we actually have $W=\bigcup_{n=1}^{N_0}M_n$.

Using {\bf Zoretti's Theorem} repeatedly, we may find a sequence of pairwise disjoint Jordan curves $\gamma_k\subset U$ that converge to $\Gamma_p$ under Hausdorff distance.
Fix a closed subarc $w_0\subset U$ that lies in $\mathbb{D}^*\cap\partial D$. Then $w_0$ separates $\mathbb{D}^*\cap\partial D$ into two open arcs, denoted as $a$ and $b$, each of which connects $w_0$ to $\Gamma_p$. Choosing an appropriate subsequence, we may assume that every $\gamma_k$ separates $w_0$ from $\Gamma_p$ thus intersects both $a$ and $b$. Recall that $D$ is chosen to be a closed disk.  Applying the Cut Wire Theorem \cite[p.  72, Theorem 5.2]{Nadler92}, we see that the compact set $\gamma_k\cap D$ has a component intersecting both $a$ and $b$, denoted as $\alpha_k$, which is necessarily a closed sub-arc of $\gamma_k$. See Figure \ref{2.6}.
\begin{figure}[ht]
\begin{tikzpicture}[x=1cm,y=1.0cm,scale=0.33]
\fill[thick,gray!10]  (5,0) circle (3);
\draw  (5,0) circle (3);
\draw[thick]  (0,0) circle (5);
\draw[black] (5,3) -- (5.9,1.5) --(5.9,-1.5)-- (5,-3);
\node at (6.6,0) {$\alpha_k$};

\draw  [fill,gray](5,0)circle (0.2);
\node at (4.0,0) {$z_0$};
\node at (9.0,0) {$w_0$};


\node at (-6,0) {$\Gamma_p$};
\node at (7,2.9) {$b$};
\node at (7,-2.8) {$a$};

\draw [black,very thick] (8,0) arc(0:20:3);
\draw [black,very thick] (8,0) arc(0:-20:3);

\end{tikzpicture}
\vskip -0.25cm
\caption{Relative locations of $z_0, w_0, a, b$ and $\alpha_k$.}\label{2.6}
\end{figure}

Let $W_k$ be the union of all these $M_n(1\le n\le N)$ that intersect $\alpha_k$.
Then $W_k$ is connected hence is a domain, that contains the whole arc $\alpha_k$. Since there are finitely many choices for the domains $M_n$, we can find an infinite subsequence, say $\{k_i: i\ge1\}$, such that these domains $W_{k_i}$ coincide with one  another.

We {\bf claim} that each of these domains $W_{k_i}$ contains $W$.

Recall that $W=\bigcup_{n=1}^{N_0}M_n$ and that $z_0\notin\overline{M_n}$ for $ n> N_0$.
Also recall that $U\setminus W$ is contained in the union of those $M_n$, with $N_0<n\le N$ and that $z_0\in\overline{M_n}$ for $1\le n\le N_0$,  we shall have $z_0\notin \overline{U\setminus W}$.
By the previous claim, for  any  $r$  smaller than the distance from $z_0$ to $\overline{U\setminus W}$, the intersection $D_r(z_0)\cap U$ is contained in $W$, which is further contained in $W_{k_1}\subset U$. If $r$ is small enough, with $D_r(z_0)\subset D$, then $N_r:=D_r(z_0)\cup W_{k_1}$ is an open neighborhood of $z_0$ such that
$N_r\cap U=W_{k_1}\cap U=W_{k_1}$.  This will complete our proof, since $W_{k_1}$ is known to be connected.


To verify the above mentioned claim, we connect $z_0$ to a point $w_n\in M_n$ by an open arc $\beta_n\subset M_n$ for $1\le n\le N_0$. Such an arc always exists, since $M_n$ has Property S and since $z_0\in\overline{M_n}$ \cite[p.  111 (4.1)]{Whyburn42}. Recall that $\lim\limits_{i\rightarrow\infty}\alpha_{k_i}=D\cap\Gamma_p$ under Hasudorff distance. This limit is a circular arc, with end points $u$ and $v$. Given $1\le n\le N_0$ and  $\delta>0$ smaller than the distance from $\{u,v\}$ to $\beta_n$, the union $\alpha_{k_i}\cup D_\delta(u)\cup D_\delta(v)$ separates $z_0$ from $w_n$ in $U$ for all but finitely many $i$. This indicates that $\beta_n$ and hence $M_n$ intersects $\alpha_{k_i}$ for all but finitely many $i$. Thus $M_n$ is contained in all but finitely many $W_{k_i}$, indicating that  $W=\bigcup_{n=1}^{N_0}M_n$   is contained in all but finitely many $W_{k_i}$. Consequently, we have $W\subset W_{k_i}$ for $i\ge1$.
\end{proof}

\begin{rema}\label{N_Younsi}
Notice that Ntalampekos and Younsi have obtained the result of Theorem \ref{connected_cluster} in \cite{Ntalampekos-Younsi20}. Observe that, although these authors focus on homeomorphisms  $h: U\rightarrow X$  between planar domains, their proof for \cite[Proposition 3.5]{Ntalampekos-Younsi20} is still valid when $X$ is a general compact metric space and when $h$ is only assumed to be a continuous map. Thus the implication $(1)\Rightarrow(2)$ of Theorem \ref{characterizing_GJD} is already included in \cite{Ntalampekos-Younsi20}. For further details concerning this observation, we refer to \cite[Theorem 3.6]{Ntalampekos-Younsi20} and \cite[Lemma 3.7]{Ntalampekos-Younsi20}.
However, our proof for Theorem \ref{connected_cluster} is based on more direct arguments. These arguments are not dependent on Moore's decomposition theorem \cite{Moore25}, which plays an important role in the proof for \cite[Proposition 3.5]{Ntalampekos-Younsi20}. For the moment, it is not clear whether our strategy works for similar questions  concerning the connectedness of cluster sets for  continuous maps of domains  $U\subset\mathbb{R}^n$ with $n\ge3$ into a  compact metric space. Note that, in such a situation, we do not have a result like Moore's decomposition theorem.
\end{rema}

Now we continue to prove Theorem \ref{connected_cluster_GJD}, by obtaining Propositions \ref{non_PC} and \ref{cut_point}. In the proof for Proposition \ref{cut_point}, we  will need the following lemma whose result is immediate.

\begin{lemm}\label{cluster_preimage}
Let $f: U\rightarrow\Omega$ be a homeomorphism between planar domains such that the inverse $\varphi=f^{-1}: \Omega\rightarrow U$ has a continuous extension to $\overline{\Omega}$, still denoted as $\varphi$. Then $C(f,z_0)=\varphi^{-1}(z_0)$ for all $z_0\in\partial U$.
\end{lemm}

\begin{prop}\label{non_PC}
Let $U\subset\widehat{\mathbb{C}}$ be a domain. If $\partial U$ is not a Peano compactum there is a bounded continuous map
$f: U\rightarrow\mathbb{R}$  such that  $C(f,z_0)$ is disconnected for some $z_0\in \partial U$.
\end{prop}
\begin{proof}
We may assume that $\infty\in\widehat{\mathbb{C}}$ lies in $U$ and consider  $\partial U$ as a compact subset of $\mathbb{C}$. Since $\partial U$ is not a Peano compactum, we may apply \cite[Theorem 4.8]{FLY20} and infer that $K=\widehat{\mathbb{C}}\setminus U$ is not a Peano compactum, either.  Applying  \cite[Theorem 3]{LLY-2019} we see that $K$ does not satisfy the Sch\"onflies condition. See also Definition \ref{Schonflies_condition}. That is to say, there exist two parallel lines $L_1,L_2$  such that the {\bf unbounded closed strip} $W$,  with $\partial W=L_1\cup L_2$, is such that $W\setminus K=W\cap U$ has infinitely many components, say $W_n$ with $n\ge1$, intersecting both $L_1$ and $L_2$.

With no loss of generality we assume that the lines $L_i$ are horizontal.
Moreover,  by scaling we may assume that the distance between $L_1$ and $L_2$ is exactly one. The compactness of $\partial U$ implies that there is a rectangle $R_0\subset W$  containing all but two of those $W_n$. By applying a map of the form $x+\mathbf{i}y\mapsto ax+\mathbf{i}y$, we also assume that $R_0$ is a square of side length one.

Now, by going to an appropriate subsequence, we may further assume that the limit under Hausdorff distance $\lim\limits_{n\rightarrow\infty}\overline{W_n}=M$ exists. This limit $M$ is a continuum. If $z\in M$ then any  disk $D_z$ centered at $z$ intersects infinitely many $W_n$, say, at a point $z_n$. Since all those sets $W_n$ are distinct components of $W\setminus K=W\cap U$,  we can further infer that the segment from $z$ to $z_n$ hence the disk $D_z$ intersects $\overline{W_k}\setminus W_k$ for infinitely many $k$. Therefore, $M$ is a subset of $\partial U$.
Now, fix $z_0\in M$ whose distance to $L_1$ is $\frac12$. We will construct a continuous map $f: U\rightarrow[-\sqrt{2},\sqrt{2}]$ such that the cluster set $C(f,z_0)$ is a subset of $\left[-\sqrt{2},-\frac13\right]\cup\{0\}\cup\left[\frac13,\sqrt{2}\right]$ that intersects the two intervals $\left[-\sqrt{2},-\frac13\right]$ and $\left[\frac13,\sqrt{2}\right]$ both. This then indicates that $C(f,z_0)$ is disconnected.

To do that, we start from a function $f_0: U\rightarrow\mathbb{R}$ defined as follows. Firstly,  $f_0(z)=0$ for all $z\in U$ that do not belong to any $W_n$; secondly, if $z\in W_n$ for some $n\ge1$, let $f_0(z)$ be the infimum of the diameter of all arcs that connect $z$ to a point on $(L_1\cup L_2)\cap W_n$. Clearly, $f_0$ is continuous and its range lies in $[0,\sqrt{2}]$. Now, it suffices to set
$f(z)=f_0(z)$ for $z\notin\left(\bigcup_nW_n\right)$ and $f(z)=(-1)^nf_0(z)$ for $z\in W_{n}$.
Indeed, for this map $f$ and the point $z_0\in M$ chosen as above, the image $f(U\cap D_r(z_0))$ with $r<\frac16$ is a subset of $\left[-\sqrt{2},-\frac13\right]\cup\{0\}\cup\left[\frac13,\sqrt{2}\right]$ that intersects the two intervals $\left[-\sqrt{2},-\frac13\right]$ and $\left[\frac13,\sqrt{2}\right]$ both.
\end{proof}

\begin{prop}\label{cut_point}
Given a domain $U\subset\widehat{\mathbb{C}}$. If $\partial U$ is a Peano compactum but $U$ is not a generalized Jordan domain, there is a continuous map
$f: U\rightarrow\mathbb{D}$  such that the cluster set $C(f,z_0)$ for some $z_0\in\partial U$ is disconnected.
\end{prop}
\begin{proof}
Assume that $\infty\in\widehat{\mathbb{C}}$ lies in $U$.
As $\partial U$ is not a generalized Jordan domain, by Torhorst Theorem \cite[p.  512, \S61, II, Theorem 4]{Kuratowski68}, at least one of the components of $\partial U$, say $Q$, contains a cut point $z_0$.
Here a point  $z_0$ of a connected set $Q$ is called a cut point of $Q$ if $Q\setminus z_0$ is no longer connected, so that there is a separation $Q\setminus\{z_0\}=Q_1\cup Q_2$, satisfying $\overline{Q_1}\cap Q_2=\emptyset=Q_1\cap \overline{Q_2}$.

As $\partial U$ is a Peano compactum,  $Q$ is a Peano continuum.
Let $W_Q$ be the component of $\widehat{\mathbb{C}}\setminus Q$ containing $U$. As $W_Q$ is simply connected, we can choose a Riemann mapping $f_0: W_Q\rightarrow\mathbb{D}$ that sends $\infty$ to $0$. Due to the Carath\'eodory's Continuity Theorem, the inverse map $f_0^{-1}: \mathbb{D}\rightarrow W_Q$ has a continuous extension to the closure $\overline{\mathbb{D}}$, denoted as $\varphi_0$.

Finally, we set $f=\left.f_0\right|_U$ and $\varphi=\left.\varphi_0\right|_{\overline{f_0(U)}}$. By Lemma \ref{cluster_preimage} we have $\varphi^{-1}(z_0)=C(f,z_0)$. So we just need to show that $\varphi^{-1}(z_0)$ is disconnected. Indeed, if on the contrary $\varphi^{-1}(z_0)$  were connected it would be a sub-interval $I$, satisfying $\varphi(\partial\mathbb{D}\setminus I)=Q_1\cup Q_2$. This would be absurd, since $Q\setminus\{z_0\}=Q_1\cup Q_2$ is known to be a separation. \end{proof}

\subsection{On Domains $\Omega\subset\hat{\mathbb{C}}$ Having a  Peano Compactum as Boundary}\label{topology}

In order to obtain Theorem \ref{topology_metric}, we shall prove that the following six conditions are equivalent for all domains $\Omega\subset\hat{\mathbb{C}}$:
\begin{enumerate}
\item[(1)]  $\partial\Omega$ is a Peano compactum.
\item[(2)]  $\Omega$ has Property S.
\item[(3)]   all points of $\partial\Omega$ are locally accessible.
\item[(4)]  all points of $\partial\Omega$ are locally sequentially accessible.
\item[(5)]  $\Omega$ is finitely connected at the boundary.
\item[(6)] The completion $\overline{\Omega}_d$ of $\Omega$ under the Mazurkiewicz distance $d$ is compact.
\end{enumerate}
Our arguments will center around two groups of implications: $(1)\Leftrightarrow(2)\Leftrightarrow(5)\Leftrightarrow(6)$ and $(2)\Rightarrow(3)/(4)$ and $(3)/(4)\Rightarrow(1)$. The equivalence $(1)\Leftrightarrow(2)$ is obtained by {\bf Theorem \ref{property_s}}. The equivalence $(5)\Leftrightarrow(6)$ has been given in \cite[Theorem 1.1]{BBS16}.  The equivalence $(2)\Leftrightarrow(5)$ is to be established in {\bf Theorem \ref{S_finitely_connected}}. The implication $(2)\Rightarrow(3)$ is already known \cite[p.  111, (a)]{Whyburn42} and the implications $(3)/(4)\Rightarrow(1)$ and $(2)\Rightarrow(4)$ will be discussed in {\bf Theorem \ref{3-1-4-1}}.

\begin{rema}\label{notions_for_th.1}
There are four issues we want to mention.
Firstly, {\bf local accessibility} is a synonym for {\bf regular accessibility}, as introduced in \cite[p.  112, Theorem (4.2)]{Whyburn42}. See also Theorem \ref{whyburn_112}.
Secondly, a point $\xi\in\partial\Omega$ is called {\bf locally sequentially accessible} if for each $r>0$ and for each sequence $\{\xi_n\}$ of points in $\Omega$ that converge to $\xi$ the common part $\Omega\cap D_r(\xi)$ is an open set such that one of its components contains infinitely many $\xi_n$.
Thirdly, a domain $\Omega\subset\hat{\mathbb{C}}$ is {\bf finitely connected at a boundary point $x\in\partial\Omega$} provided that for any number $r>0$ there is an open subset $U_x$ of $\hat{\mathbb{C}}$, with $x\in U_x\subset D_r(x)$, such that $U_x\cap\Omega$ has finitely many components. If $\Omega$ is finitely connected at every of its boundary points, we say that $\Omega$ is {\bf finitely connected at the boundary}.  Recall that if $U_x\cap\Omega$ is further required to be connected we say that $\Omega$ is simply connected at $x$ and that if $\Omega$ is simply connected at every of its boundary points we say that $\Omega$ is simply connected at the boundary. See {\bf Theorem \ref{characterizing_GJD}} that characterizes all generalized Jordan domains.
Lastly, the Mazurkiewicz distance $d$ between $x\ne y \in \Omega$ is defined by $d(x,y):=\inf \rm{diam}\ E$, where the infimum is taken over all connected sets $E\subset\Omega$ containing $x$ and $y$ simultaneously. See for instance  \cite{BBS16}.
\end{rema}

\begin{theo}\label{S_finitely_connected}
A domain $\Omega\subset\hat{\mathbb{C}}$ has Property S if and only if it is finitely connected at the boundary.
\end{theo}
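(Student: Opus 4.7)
The plan is to prove both implications directly. The key tool for the ``if'' direction is Whyburn's refinement of Property S (see \cite[p.21, Theorem (15.41)]{Whyburn42}) which, for each $\epsilon>0$, produces a decomposition $\Omega=M_1\cup\cdots\cup M_k$ with each $M_i$ a connected open subregion of $\Omega$ of diameter less than $\epsilon$; the key tool for the ``only if'' direction is the compactness of $\overline{\Omega}\subset\hat{\bbC}$ together with the defining property of finite connectivity at each boundary point.

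For the necessity, fix $x\in\partial\Omega$ and $r>0$, and apply Whyburn's refinement at scale $\epsilon=r/3$ to write $\Omega=M_1\cup\cdots\cup M_k$ accordingly. Let $I=\{i:x\in\overline{M_i}\}$, which is nonempty because the finite union identity $\overline{\Omega}=\bigcup_i\overline{M_i}$ forces $x$ into some $\overline{M_i}$; for each $i\in I$ the bound $\mathrm{diam}(M_i)<r/3$ then forces $M_i\subset D_{r/3}(x)$. For the remaining indices $j\notin I$ one has $x\notin\overline{M_j}$, hence $\rho:=\min\{d(x,M_j):j\notin I\}>0$, with the convention $\rho=+\infty$ if no such $j$ exists. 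Setting $\delta=\min(r/3,\rho)$ and
\[
U_x:=D_\delta(x)\cup\bigcup_{i\in I}M_i,
\]
one checks that $U_x$ is an open neighbourhood of $x$ contained in $D_r(x)$, and that $D_\delta(x)\cap M_j=\emptyset$ for every $j\notin I$ reduces $U_x\cap\Omega$ to $\bigcup_{i\in I}M_i$, a finite union of connected pieces with at most $|I|\leq k$ components.

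For the sufficiency, fix $\epsilon>0$, and for every $x\in\overline{\Omega}$ choose an open neighbourhood $V_x\subset D_{\epsilon/3}(x)$ of $x$ in $\hat{\bbC}$ such that $V_x\cap\Omega$ has only finitely many components: if $x\in\Omega$ take a small open disk contained in $\Omega$, while if $x\in\partial\Omega$ invoke finite connectivity at $x$ with $r=\epsilon/3$. Compactness of $\overline{\Omega}$ yields a finite subcover $V_{x_1},\dots,V_{x_n}$, and each component of each $V_{x_j}\cap\Omega$ is connected of diameter at most $\mathrm{diam}(V_{x_j})<\epsilon$, so $\Omega$ decomposes into finitely many connected sets of diameter less than $\epsilon$, establishing Property $S$. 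I do not anticipate any serious obstacle beyond verifying in the forward direction that the neighbourhood really contains $x$, which is the purpose of adjoining the auxiliary disk $D_\delta(x)$ to $\bigcup_{i\in I}M_i$.
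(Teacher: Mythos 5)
Your proof is correct and follows essentially the same route as the paper's: the ``only if'' direction uses Whyburn's refinement of Property $S$ into finitely many open subregions of small diameter and isolates those whose closures contain $x$ (your neighbourhood $D_\delta(x)\cup\bigcup_{i\in I}M_i$ is just a cleaner packaging of the paper's $U_x\cup\{x\}\cup\{z\notin\Omega:|z-x|<\min(r/3,r_x)\}$), while the ``if'' direction is the same compactness/finite-subcover argument, merely covering all of $\overline{\Omega}$ in one step rather than treating $\partial\Omega$ and the remaining compact part of $\Omega$ separately.
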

\begin{proof} Suppose that $\Omega$ is finitely connected at the boundary. Then, given $r>0$ we can find for any $x\in\partial\Omega$ an open set $G_x$, lying in the open disk centered ar $x$ with radius $\frac{r}{2}$, such that $G_x\cap\Omega$ has finitely many components \cite[Definition 2.2]{BBS16}.
Clearly, the collection $\{G_x: x\in\partial\Omega\}$ gives an open cover of the boundary $\partial\Omega$. So we can find a finite subcover of $\partial \Omega$, denoted as $\{G_1,\ldots, G_n\}$. Since $\Omega\setminus\left(\bigcup G_i\right)$ is a compact subset of $\Omega$, we can cover it with finitely many small disks contained in $\Omega$, with radius $<\frac{r}{2}$. For $1\le i\le n$ the intersection $G_i\cap\Omega$ has finitely many components, each of which is a domain. These domains and the above-mentioned small disks, that cover $\Omega\setminus\left(\bigcup G_i\right)$, form a finite cover of $\Omega$ by subdomains of $\Omega$ having a diameter $<r$. This shows that $\Omega$ has Property S.

In the sequel, we assume that $\Omega$ has Property S. Then, for any $r>0$ we can cover $\Omega$ by finitely many domains $W_1,\ldots, W_N\subset\Omega$ which have a diameter smaller than an arbitrary $\varepsilon\in(0,\frac{r}{3})$. For any $x\in\partial\Omega$, denote by $U_x$ the union of all those $W_i$ whose closure contains $x$ and by $E_x$ the union of all those $W_i$ whose closure does not contain $x$. Then $\overline{E_x}$ is a compact set, whose distance to $x$ is a positive number $r_x>0$.

Let $D$ be the disk centered at $x$ with radius smaller than  $\min\left\{\frac{r}{3},r_x\right\}$ so that $D\cap \Omega\subset U_x$. It is clear that  $G_x=U_x\cup D$ is an open subset of $\widehat{\mathbb{C}}$, containing $x$ and contained in $D_r(x)$, that satisfies $G_x\cap\Omega=U_x$. Since $U_x$ is the union of some of the domains $W_1,\ldots, W_N$, we see that it has finitely many components. This verifies that $\Omega$ is finitely connected at $x$.
\end{proof}

\begin{theo}\label{3-1-4-1}
Theorem \ref{topology_metric} is true, since the implications $(3)/(4)\Rightarrow(1)$ and $(2)\Rightarrow(4)$ hold.
\end{theo}
\begin{proof}
Without losing generality, we may assume that  $\infty\in\Omega$. Under this context $\partial\Omega$ may be considered as a compactum on $\mathbb{C}$.
And we shall  start from the implications $(3)\Rightarrow(1)$ and $(4)\Rightarrow(1)$, each of which will be obtained by a contrapositive proof.

Suppose on the contrary that $\partial\Omega$ were not a Peano compactum. Then it would not satisfy the Sch\"onflies condition \cite[Theorem 3]{LLY-2019}. In other words, there would exist an unbounded closed strip $W$, whose boundary consists of two parallel lines $L_1\ne L_2$, such that $W\setminus\partial\Omega$ has infinitely many components, say $W_n$ for $n\ge1$, each of which intersects both $L_1$ and $L_2$. 

The following construction is similar to that in the proof of Theorem \ref{property_s}. Since every $W_n$ is arcwise connected, we may choose an open arc $\alpha_n\subset W_n$ joining a point on $W_n\cap L_1$ to a point on $W_n\cap L_2$. Suppose that the arcs $\alpha_n$ are arranged inside $W$ linearly from left to right.  See Figure \ref{long-band} for a simplified depiction.
Let $D_n(n\geq1)$ be the bounded component of $\mathbb{C}\setminus(L_1\cup L_2\cup \alpha_n\cup\alpha_{n+1})$. Then each $D_n$ is a Jordan domain; moreover, the closure $\overline{D_n}$ contains a continuum $M_n\subset\partial\Omega$ that separates $\alpha_n$ from $\alpha_{n+1}$ in $\overline{D_n}$ \cite[lemma 3.6]{LLY-2019}. Such a continuum $M_n$ must intersect both $L_1$ and $L_2$.

Now let $L$ be the line parallel to $L_1$ with ${\rm dist}(L,L_1)={\rm dist}(L,L_2)$.
Then $L$ intersects $M_n$ for all $n\ge1$. Pick an infinite sequence $\{z_n\}$ of points lying in $M_n\cap L$. By going to an appropriate subsequence, if necessary, we may assume that  $\lim\limits_{n\rightarrow\infty}z_n=z_0\in\partial\Omega$. Pick for all $n\ge1$ a point $\xi_n\in (\Omega\cap D_n)$  such that $\lim\limits_{n\rightarrow\infty}|\xi_n-z_n|=0$.

For every $n\ge1$, the arc connecting $\xi_n$ to $z_0$ intersects $L_1\cup L_2$. All those arcs are necessarily of diameter $\ge{\rm dist}(L,L_1)$.
This indicates that $z_0$ is not locally accessible from $\Omega$ and hence verifies the implication $(3)\Rightarrow(1)$.
On the other hand, we may consider the subsequence $\{\xi_{2n}\}(n\geq 1)$, every two points of which are in distinct components of $W\cap \Omega$. Fix a neighborhood $V_0$ of $z_0$, which entirely lies in the interior of $W$, then each component of $V_0\cap \Omega$ contains at most one point of $\{\xi_{2n}\}$. This indicates that $z_0$ is not locally sequentially accessible from $\Omega$ and verifies the implication $(4)\Rightarrow(1)$.

The rest of our proof is to verify the implication $(2)\Rightarrow(4)$. Let $z_0\in\partial \Omega$ and $\{z_n\}$ be an arbitrary infinite sequence that lies in $\Omega$ with $\lim\limits_{n\rightarrow\infty}z_n=z_0$. Since $\Omega$ has Property S, for any $r>0$, $\Omega$ is the union of finitely many connected sets of diameter less than $r$. Therefore, there exists a connected set containing infinitely many $z_n$ and certainly such that $z_0$ lies in its boundary. Clearly, this connected set is contained in a single component of $\Omega\cap D_r(z_0)$.
\end{proof}

\section{On Homeomorphisms of a Generalized Jordan Domain}\label{proof_1/2}


The target of this section is to establish the following. 

\begin{theo}\label{topological-cct}
A homeomorphism  $\varphi$ of a generalized Jordan domain $U$  onto a domain $\Omega\subset\hat{\mathbb{C}}$ has a continuous extension $\overline{\varphi}: \overline{U}\rightarrow\overline{\Omega}$ if and only if the conditions below are both satisfied.
\begin{itemize}
\item[(1)] The boundary $\partial\Omega$ is a Peano compactum.
\item[(2)] The oscillations of $\varphi$ satisfy $\liminf\limits_{r\rightarrow 0}\sigma_r(z_0)=0$ for all $z_0\in\partial U$.
\end{itemize}
Suppose that the continuous extension  $\overline{\varphi}: \overline{U}\rightarrow\overline{\Omega}$ exists. Then $\overline{\varphi}$ is a monotone map if and only if $\Omega$ is also a generalized Jordan domain. Consequently, if further no arc on $\partial U$ of positive {\red{diameter}} is sent by $\overline{\varphi}$ to a single point of $\partial\Omega$ then $\overline{\varphi}$ is actually a homeomorphism.
\end{theo}

For any $r>0$ and for any $z_0\in\partial U$, we denote by $C_r(z_0)$ the circle centered at $z_0$ with radius $r$ and define the oscillation of $\varphi$ at $C_r(z_0)\cap U$, denoted as $\sigma_r(z_0)$, to be the supremum of the spherical distance from $\varphi(x)$ to $\varphi(y)$ for all $x,y\in C_r(z_0)\cap U$. Also note that a monotone map is a continuous map between compact metric spaces with connected point inverses .

We shall follow a philosophy  employed in  Arsove's Theorem from \cite{Arsove68-a}, which reads as follows.
\begin{theorem*}
Each of the following is necessary and sufficient for a bounded simply connected plane region $\Omega$ to have its boundary parametrizable as a closed curve {\rm(or equivalently, being a Peano continuum)}:
\begin{enumerate}
\item[(1)] all points of $\partial\Omega$ are locally accessible,
\item[(2)] all points of $\partial\Omega$ are locally sequentially accessible,
\item[(3)] some (equivalently, any) Riemann mapping function $\varphi:\mathbb{D}\rightarrow \Omega$ for $\Omega$ can be extended to a continuous mapping of $\overline{\mathbb{D}}$ onto $\overline{\Omega}$.
\end{enumerate}
\end{theorem*}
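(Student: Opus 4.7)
The plan is to derive Arsove's Theorem as a short consequence of two results that are already available to us: Theorem \ref{topology_metric}, just proved in the previous section, and the classical Carath\'eodory Continuity Theorem recalled at the start of the excerpt. Since $\Omega$ is bounded and simply connected, the boundary $\partial\Omega$ is a non-degenerate continuum. In this connected setting ``being a Peano compactum'' in the sense of this paper reduces to ``being a single Peano continuum,'' which by the Hahn-Mazurkewicz-Sierpi\'nski theorem is in turn equivalent to $\partial\Omega$ being parametrizable as a closed curve. So the whole statement collapses into showing that the target property $P$ := ``$\partial\Omega$ is a Peano continuum'' is equivalent to each of (1), (2), (3).

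First, I would feed Theorem \ref{topology_metric} into the simply connected setting. Items (2) and (3) of that theorem state that local accessibility at every boundary point and local sequential accessibility at every boundary point are each equivalent to $\partial\Omega$ being a Peano compactum. Combined with the preceding observation, these deliver (1)$\,\Leftrightarrow\,P$ and (2)$\,\Leftrightarrow\,P$ with no extra work. This step is purely bookkeeping, since all the substantive topological content was absorbed into Theorem \ref{topology_metric}.

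Next, for condition (3), I would invoke the Carath\'eodory Continuity Theorem quoted in Section~1: a conformal homeomorphism $\varphi:\bbD\to\Omega$ admits a continuous extension to $\overline{\bbD}$ if and only if $\partial\Omega$ is a Peano continuum. Thus ``some Riemann mapping function extends continuously'' $\Leftrightarrow P$. The equivalence ``some $\Leftrightarrow$ any'' in condition (3) is automatic: by the Riemann Mapping Theorem any two Riemann maps onto $\Omega$ differ by a M\"obius self-map of $\bbD$, which extends to a homeomorphism of $\overline{\bbD}$, so continuous extension of one is equivalent to continuous extension of all. This closes the chain (1)$\,\Leftrightarrow\,$(2)$\,\Leftrightarrow\,P\,\Leftrightarrow\,$(3).

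The one place where I would have expected genuine difficulty is the implication (3)$\,\Rightarrow$(1) (or (3)$\,\Rightarrow$(2)), since condition (3) is a global extension statement that must be converted into pointwise accessibility information on $\partial\Omega$. In Arsove's original paper \cite{Arsove67} this is handled by a direct analytic argument on the disk. In the framework of the present paper, however, one never has to touch this implication directly: the pivot ``$\partial\Omega$ is a Peano continuum'' (provided by Theorem \ref{property_s} and Theorem \ref{topology_metric}) mediates between the topology of $\partial\Omega$ and the boundary behaviour of $\varphi$, so the entire theorem reduces to the two quotations above with essentially no new calculation.
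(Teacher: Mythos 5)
Your derivation is correct, but you should be aware that the paper itself offers no proof of this statement: Arsove's Theorem is quoted verbatim from \cite{Arsove67} as known background immediately before Theorem \ref{sufficient}, and the logical traffic in the paper runs in the opposite direction --- the proof of Theorem \ref{topology_metric} (in particular the implication $(1)\Rightarrow(4)$ treated in Theorem \ref{3-1-4-1}) explicitly borrows ideas from Arsove's original arguments rather than invoking his theorem as a black box. Your route is therefore genuinely different from anything in the paper: you note that a bounded simply connected region has a non-degenerate boundary continuum, so that ``Peano compactum'' collapses to ``Peano continuum'', equivalently to parametrizability as a closed curve; you then read off the equivalence of conditions (1) and (2) with this property from items (2) and (3) of Theorem \ref{topology_metric}, and you obtain the equivalence with condition (3) from the classical Continuity Theorem together with the M\"obius-invariance of continuous extendability (which settles ``some versus any''). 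Each step checks out, and there is no formal circularity, since Theorem \ref{topology_metric} is proved in the paper without assuming Arsove's statement. What your route buys is economy --- the theorem becomes a short corollary of machinery already in place; what it costs is that it presupposes the full strength of Theorem \ref{topology_metric} and of Carath\'eodory's theorem, whereas Arsove's own proof is self-contained and is precisely the source of the techniques that the paper goes on to generalize.
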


The first part of Theorem \ref{topological-cct} is known for simply connected domains $\Omega$
\cite[Theorem 1]{Arsove68-a}. And  a topological counterpart for the OTC Theorem is also established. See \cite[Theorem 2]{Arsove68-a}.  In the sequel, we use {\bf Property $S$} instead of the property of {\bf being locally sequentially accessible}. As in earlier works, such as \cite{Arsove67,Arsove68-a}, we also need to estimate from above the oscillations of the homeomorphism $\varphi: U\rightarrow \Omega$, which is now defined on  a generalized Jordan domain $U$.

To do that, we shall employ a {\bf bijection} between the {\bf boundary components} of $U$ and those of $\Omega$. This bijection has been discussed in \cite{He-Schramm93} and \cite[Proposition 3.1]{Ntalampekos-Younsi20}. It associates to any component $Q$ of $\partial U$ a component $P$ of $\partial\Omega$, which actually consists of all the cluster sets $C(\varphi,z_0)$ with $z_0\in Q$. Denote this bijection $\varphi^B(Q)=P$.


The rest of this section is divided into two parts. Our aim is to consruct a complete proof for Theorem \ref{topological-cct}. The first half is given in Theorem \ref{necessary} and the later half in Theorems \ref{boundary_property} and \ref{monotone_bd_map}. All these materials are presented in  two subsections.

\subsection{Topological Counterpart for Theorems \ref{arsove-1}-\ref{arsove-2}}\label{outline}
We prove the first half of {\bf Theorem \ref{topological-cct}}.

To begin with, let us recall a recent result by He and Schramm: {\em each countably connected domain $\Omega\subset\hat{\mathbb{C}}$ is conformally homeomorphic to a circle domain $D$, unique up to M\"obius equivalence} \cite[Theorem 0.1]{He-Schramm93}. Slightly later,  they even prove that any domain $\Omega\subset\hat{\mathbb{C}}$  is conformally equivalent to some circle domain provided that (1) $\partial\Omega$ has
at most countably many components that are {\bf not} geometric circles or single points and  (2) the collection of those components has a countable closure in the space formed by all the components of $\partial\Omega$ \cite{He-Schramm95a}.
However, Koebe's conjecture is still open if $\partial\Omega$ is complicated.
Therefore, we may focus on domains $\Omega$ such that the boundary $\partial\Omega$ is ``simple'' in some sense.

In other words, we would like to limit our discussions to the case when  $\partial\Omega$ does not possess a difficult topology.
To this end, we  examine the necessary conditions for a homeomorphism $\varphi: U\rightarrow \Omega$ to have a continuous extension to the closure $\overline{U}$. At this point, we even do not require the homeomorphism $\varphi:  U\rightarrow \Omega$ to be conformal.

The first half of Theorem \ref{topological-cct} is exactly the following.

\begin{theo}\label{necessary}
A homeomorphism $\varphi$ of a generalized Jordan domain $U$ onto a domain $\Omega\subset\hat{\mathbb{C}}$ continuously extends to $\overline{U}$ if and only if $\partial\Omega$ is a Peano compactum and $\liminf\limits_{r\rightarrow 0}\sigma_r(z_0)=0$ for all $z_0\in\partial U$.
\end{theo}

\begin{proof} Let us start from the ``only if'' part. Assume that $\varphi$ has a continuous extension $\overline{\varphi}: \overline{U}\rightarrow\overline{\Omega}$.
The uniform continuity of $\overline{\varphi}:\ \overline{U}\rightarrow\overline{\Omega}$ indicates that $\liminf\limits_{r\rightarrow 0}\sigma_r(z_0)=0$  for all $z_0\in\partial U$.
So the only thing to be verified is that the boundary $\partial\Omega$ is a Peano compactum.
Since  $U$ is a generalized Jordan domain, all the components of $\partial U$ are each  a point or a Jordan curve. Since each of those  components is mapped by $\overline{\varphi}$  onto a component of $\partial\Omega$, the uniform continuity of $\overline{\varphi}$ then ensures that $\partial\Omega$ is also a Peano compactum.

In the sequel we continue to prove the ``if'' part. And we just need to show that each cluster set $C(\varphi,z_0)$ is a singleton. Suppose on the contrary that the cluster set $C(\varphi,z_0)$ at $z_0\in\partial U$ contains two points, say $w_1\ne w_2$. Then we can find an infinite sequence $z_n\rightarrow z_0$ of distinct points satisfying $\varphi\left(z_{2n-1}\right)\rightarrow w_1$ and $\varphi\left(z_{2n}\right)\rightarrow w_2$.
Since $\partial\Omega$ is a Peano compactum, by Theorem \ref{property_s} we see that $\Omega$ has Property $S$. That is to say, for any $\varepsilon>0$ we can find finitely many connected subsets of $\Omega$, say $N_1,\ldots,N_k$, satisfying $ \bigcup_iN_i=\Omega$ and $\displaystyle\max_{1\le i\le k}{\rm diam}(N_i)<\varepsilon$.

Here we choose  $\varepsilon<\frac13|w_1-w_2|$. Then, there exist two of those connected sets $N_i$, say $N_1$ and $N_2$, such that
(1) $N_1$ contains infinitely many points in $\{\varphi\left(z_{2n-1}\right)\}$ and that (2) $N_2$ contains infinitely many points in $\{\varphi\left(z_{2n}\right)\}$.
Since $z_n\rightarrow z_0$ and since  $\liminf\limits_{r\rightarrow 0}\sigma_r(z_0)=0$, we can further choose a small enough number $r>0$, with $\sigma_r(z_0)<\varepsilon$, such that each of $\displaystyle N_1\cap\{\varphi\left(z_{2n-1}\right)\}$ and $\displaystyle N_2\cap \{\varphi\left(z_{2n}\right)\}$ contains at least one point that is outside of $\varphi(D_r(z_0)\cap U)$ and at least one point inside.
Therefore, by the connectedness of $N_i$, we will  have $N_i\cap\varphi(C_r(z_0)\cap U)\ne\emptyset$ for $i=1,2$.

Let $M$ be the union of the three sets
$\{w_1\}\cup N_1, \ \varphi(C_r(z_0)\cap U)$,  and $\{w_2\}\cup N_2$; each of them is of diameter $\le\varepsilon$.
As $\sigma_r(z_0)$ is defined to be the diameter of $\varphi(C_r(z_0)\cap U)$, we have
$|w_1-w_2|\le{\rm diam}(M)\le 3\varepsilon$. This is absurd, since we have chosen $\varepsilon<\frac13|w_1-w_2|$.
\end{proof}

\subsection{Topological Counterpart for Theorem \ref{OTC-b}}\label{outline-otc}
We prove the second half of Theorem \ref{topological-cct}.

To do that, we firstly investigate into the boundary behaviour of an arbitrary homeomorphism  $\varphi: U\rightarrow\Omega$  of a generalized Jordan domain $U$, which has a continuous extension  $\overline{\varphi}: \overline{U}\rightarrow\overline{\Omega}$ to the whole closure $\overline{U}$. More precisely, for any component $Q$ of $\partial U$, we will obtain the following fundamental property of the restriction $\left.\overline{\varphi}\right|_Q$.


\begin{theo}\label{boundary_property}
The restriction $\left.\overline{\varphi}\right|_Q: Q\rightarrow P=\overline{\varphi}(Q)$ is  non-alternating.
\end{theo}
\begin{rema}
{\rm
A continuous map $f: A\rightarrow B$ is said to be {\bf non-alternating} provided that for no two points $x,y\in B$ does there exist a separation $A\setminus f^{-1}(x)=A_1\cup A_2$ such that $y$ lies in $f(A_1)\cap f(A_2)$ \cite[p.  127, (4.2)]{Whyburn42}. 
Recall that, when $U$ equals $\mathbb{D}^*=\{z\in\hat{\mathbb{C}}: |z|>1\}$, the above property of being non-alternating is useful and has been employed in earlier studies (although without explicitly mentioning this term) on the  Julia set $K$ of a polynomial $f$ with degree $d\ge2$. In such a case, the domain $\Omega$ is chosen to be the attractive basin of $\infty$ of a polynomial $f$, denoted as $U_\infty$. If $K$ is connected there is a conformal homeomorphism $\varphi:\mathbb{D}^*\rightarrow U_\infty$, called the B\"ottcher map, such that $\varphi(z^d)=f(\varphi(z))$ for all $z\in \mathbb{D}^*$. If further $K$ is locally connected there is a continuous extension $\overline{\varphi}: \overline{\mathbb{D}^*}\rightarrow\overline{U_\infty}$, which necessarily satisfies $\overline{\varphi}(\partial\mathbb{D}^*)=K$. Now, define a closed equivalence on $\mathbb{T}=\partial\mathbb{D}^*$ by setting $x\sim y$ if and only if $\overline{\varphi}(x)=\overline{\varphi}(y)$. For any $x\in\mathbb{T}$ let $[x]_\sim$ denote the equivalence class containing $x$.
Moreover, $[x]_\sim$ can be considered as a subset of the closed unit disk $\overline{\mathbb{D}}$. Theorem \ref{boundary_property} says that the restriction $\left.\overline{\varphi}\right|_{\mathbb{T}}$ is non-alternating, which happens if and only if the convex hulls of two arbitrary classes $[x]_\sim \ne [y]_\sim$ in $\overline{\mathbb{D}}$ are disjoint.
Among others, one may refer to \cite[Proposition II.3.3]{Thurston09}. Such an equivalence on $\mathbb{T}$ is also said to be {\bf unlinked}. See for instance \cite{Douady93} or \cite[Definition 4.9]{Kiwi04}.
}
\end{rema}

\begin{proof}[{\bf Proof for Theorem \ref{boundary_property}}]
By {\bf Zoretti's Theorem}, $P$ is a component of $\partial\Omega$. By definition of {\bf non-alternating}, we only need to show that if  $Q\setminus\left(\overline{\varphi}\right)^{-1}\!(x)=A_1\cup A_2$ is a  separation  for some $x\in P$ then $\overline{\varphi}(A_1)\cap \overline{\varphi}(A_2)=\emptyset$.

Assume on the contrary that there were a point $x\in P$ and a separation $Q\setminus\left(\overline{\varphi}\right)^{-1}\!(x)=A_1\cup A_2$ such that $\overline{\varphi}(z_1)=\overline{\varphi}(z_2)$ for $z_i\in A_i (i=1,2)$.
Set $x'=\overline{\varphi}(z_1)$. 
We will show that such an assumption is absurd.

Since $U$ is a generalized Jordan domain and since $Q$ is assumed to be a Jordan curve, the point inverse $\left(\overline{\varphi}\right)^{-1}\!(x)$ contains two points $y_1\ne y_2$ such that $\{y_1,y_2\}$ separates $z_1$ from $z_2$ in $Q$. By Theorem \ref{property_s}, we know that $U$ has property $S$.
Moreover, applying \cite[p.  111, (a)]{Whyburn42}, 
we can further infer that each boundary point of $U$ is accessible from $U$.
Therefore, we can find an open arc $\alpha\subset U$ that connects $y_1$ to $y_2$.
Clearly, the union $Q\cup\alpha$ is a $\theta$-curve \cite[p.  104, Definition]{Whyburn42} and $U\setminus\alpha$ consists of two domains.
Let $U_i (i=1,2)$ be the one whose boundary contains $z_i$. See the left part of Figure \ref{5.1}, for relative locations of $\alpha$, the domains $U_i$ and the points $y_i,z_i$.
\begin{figure}[ht]
\vskip -0.25cm
\begin{tabular}{cccc}
\begin{tikzpicture}[x=1cm,y=1.0cm,scale=0.33]
\draw[thick]  (0,0) circle (5);
\draw[black, very thick, dotted] (3,4) -- (11,4) --(11,-4)-- (3,-4);
\node at (8,-4.5) {$\alpha$};

\draw  [fill,black](3,4)circle (0.2);
\node at (2.2,3.7) {$y_1$};
\draw  [fill,black](3,-4)circle (0.2);
\node at (2.2,-3.7) {$y_2$};
\draw  [fill,gray](5,0)circle (0.2);
\node at (4.0,0) {$z_2$};
\draw  [fill,gray](-5,-0)circle (0.2);
\node at (-4.0,0) {$z_1$};

\node at (8,5) {$U_1$};
\node at (8,0) {$U_2$};
\end{tikzpicture}
&&&
\begin{tikzpicture}[x=1cm,y=1.0cm,scale=0.33]
\draw[thick]  (0,0) circle (5);
\draw[black, very thick, dotted] (3,4) -- (11,4) --(11,-4)-- (3,-4);
\node at (8,-4.5) {$\alpha$};

\draw [thin, gray](8,4)-- (8,2)-- (14,2) -- (14,0)--(11,0);
\node at (15,1) {$\beta$};
\draw [thin, gray] (-5,0) -- (-5,5.4)  -- (8,5.4)-- (8,4);
\node at (2,6.2) {$\beta_1$};
\draw[gray, thin] (8,4) -- (11,4) --(11,0);

\draw [very thick, black] (8,4)--(11,4)--(11,0);
\node at (12,4) {$\beta_3$};
\draw [thin, gray] (11,0)--(5,0);
\node at (8,-1) {$\beta_2$};

\draw  [fill,black](3,4)circle (0.2);
\node at (2.2,3.7) {$y_1$};
\draw  [fill,black](3,-4)circle (0.2);
\node at (2.2,-3.7) {$y_2$};
\draw  [fill,gray](5,0)circle (0.2);
\node at (4.0,0) {$z_2$};
\draw  [fill,gray](-5,-0)circle (0.2);
\node at (-4.0,0) {$z_1$};


\draw  [fill, gray](8,4)circle (0.2);
\node at (8.7,4.7) {$b_1$};
\draw  [fill, gray](11,0)circle (0.2);
\node at (11.7,-0.7) {$b_2$};
\end{tikzpicture}
\end{tabular}
\vskip -0.25cm
\caption{The domains $U_i$, the sub-arcs $\beta_i$ and the points $y_i, z_i, b_i$.}\label{5.1}
\end{figure}

Now, fix an arc $\beta\subset U$ that connects $z_1$ to $z_2$ and denote by $\beta_i\subset(U_i\cap\beta)$  the maximal open sub-arc of $\beta$ that has $z_i$ as one of its ends. Denote by $b_i$ the other end point of $\beta_i$ for $i=1,2$. Obviously,  $b_1,b_2\in\alpha$. Let $\beta_3$ be the closed sub-arc of $\alpha$ between $b_1$ and $b_2$. Then we have an arc $\beta'=\beta_1\cup\beta_2\cup\beta_3$, lying in $U$ and intersecting $\alpha$ at $\beta_3$. See the right part of Figure \ref{5.1}.

Since $\varphi: U\rightarrow\Omega$ is a homeomorphism, we know that $\varphi(\beta')$ 
is an arc contained in $\Omega$ such that (1) $\varphi(\beta')\cap\varphi(\alpha)=\varphi(\beta_3)$ and (2) $\varphi(\beta_i)\subset \varphi(U_i)$ for $i=1,2$. As $\varphi$ is a homeomorphism, we see that $J=\varphi(\alpha)\cup\{x\}$ is a Jordan curve and $\Omega\setminus J=\varphi(U_1)\cup \varphi(U_2)$. Since  $J$ does not contain the point $x'=\overline{\varphi}(z_1)=\overline{\varphi}(z_2)$ and since each of $\varphi(\beta_1)$ and $\varphi(\beta_2)$ has $x'$ as one of its ends, we can infer that $\varphi(\beta_1)$ and $\varphi(\beta_2)$ are both contained in the same component of $\hat{\mathbb{C}}\setminus J$. Therefore, one of the two components of $\Omega\setminus J$,  either $\varphi(U_1)$ or $\varphi(U_2)$, will contain $\varphi(\beta_1)\cup\{x'\}\cup\varphi(\beta_2)$. This is impossible, since we have chosen the arcs $\beta_i\subset U_i(i=1,2)$ in a way so that $\varphi(\beta_i)\subset\varphi(U_i)$.
\end{proof}

A continuous map $f: X\rightarrow Y$ of a continuum onto another is monotone if and only if its point inverses are each a continuum \cite[p. 70]{Whyburn42}. By definition, a monotone map is non-alternating. 
If $X=[0,1]$ then $Y$ is homeomorphic with $[0,1]$ or $\{0\}$ \cite[p. 129, Proposition 8.22]{Nadler92}. Similarly, if $X$ is a simple closed curve and if $Y$ is not a single point then $Y$ is a simple closed curve, too. This follows from the observation that $X\setminus f^{-1}(\{x,y\})$ and hence $Y\setminus\{x,y\}$ are both disconnected for any $x\ne y\in Y$, since by \cite[p. 58, Theorem]{Whyburn42} we readily see that $Y$ is a simple closed curve.
Therefore, if the extension $\overline{\varphi}: \overline{U}\rightarrow\overline{\Omega}$ is monotone the codomain $\Omega$ is also a generalized Jordan domain.
We will show that the converse of this remains true. That is to say, we will have the following Theorem \ref{monotone_bd_map}.

\begin{theo}\label{monotone_bd_map}
Let $\varphi: U\rightarrow\Omega$ be a homeomorphism from a generalized Jordan domains $U$ onto a domain $\Omega\subset\hat{\mathbb{C}}$. Suppose that $\varphi$ has a continuous extension  $\overline{\varphi}: \overline{U}\rightarrow\overline{\Omega}$. Then $\overline{\varphi}$ is  monotone if and only if $\Omega$ is also a generalized Jordan domain.
\end{theo}

\begin{proof}
We just discuss the ``if'' part and show that if a component $P$ of $\partial\Omega$ is a Jordan curve and $Q$ is the component of $\partial U$ with $\overline{\varphi}(Q)=P$ then the restriction $\left.\overline{\varphi}\right|_Q$ is necessarily monotone. And we shall verify that $Q\setminus\overline{\varphi}^{-1}(x)$ is connected for any  $x\in P$, which forces  $\overline{\varphi}^{-1}(x)$ to be a continuum and completes the proof. Indeed, if on the contrary $Q\setminus\overline{\varphi}^{-1}(x)$ has at least two components, say $A_1$ and $A_2$, we may set $B_1=Q\setminus\left(\overline{\varphi}^{-1}(x)\cup A_1\right)$ and verify that $Q\setminus\overline{\varphi}^{-1}(x)=A_1\cup B_1$ is a separation. By Theorem \ref{boundary_property}, the restriction $\left.\overline{\varphi}\right|_Q$ is non-alternating, so that $\overline{\varphi}(A_1)\cap \overline{\varphi}(B_1)=\emptyset$. This indicates that $\overline{\varphi}(A_1)=P\setminus\overline{\varphi}(Q\setminus A_1)$ and $\overline{\varphi}(B_1)=P\setminus\overline{\varphi}(Q\setminus B_1)$ are disjoint and are both open in $P$. Since none of them contains $x$, we see that $P\setminus\{x\}=\overline{\varphi}(A_1)\cup \overline{\varphi}(B_1)$ is a separation. This contradicts the fact that $P\setminus\{x\}$ is a sub-arc of $P$.
\end{proof}

\begin{rema}
Theorem \ref{monotone_bd_map} establishes the second half of Theorem \ref{topological-cct}.
\end{rema}

\section{Generalization of Carath\'eodory's Continuity Theorem}\label{outline-cct}
Our target is to prove Theorems \ref{arsove-1}
to \ref{OTC-b}, which will be done separately in three subsections.

\subsection{The First Generalization of Continuity Theorem}\label{outline-1}
Here we start off by recalling a useful result \cite[Lemma 1.3]{He-Schramm94}, which reads as follows.
\begin{lemm}\label{HS-1994_1.3}
Let $Z\subset\mathbb{R}^2$ be a Borel set of $\sigma$-finite linear measure, and let $X\subset\mathbb{R}$ be the set of points $x$ such that the section $\left(\{x\}\times\mathbb{R}\right)\cap Z$ is uncountable. Then $X$ has zero Lebesgue measure.
\end{lemm}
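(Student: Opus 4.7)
The plan is to deduce this measure-theoretic statement from a standard coarea-type bound, after a routine $\sigma$-finite decomposition, so the real content is a single classical inequality of geometric measure theory rather than anything topological or conformal.

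First, I would exploit $\sigma$-finiteness to reduce to the case of finite linear measure. Writing $Z=\bigcup_{n\ge 1}Z_n$ with each $Z_n$ Borel and $\mathcal{H}^1(Z_n)<\infty$, observe that a vertical section $(\{x\}\times\bbR)\cap Z$ is uncountable if and only if at least one section $(\{x\}\times\bbR)\cap Z_n$ is uncountable, since a countable union of at most countable sets is at most countable. Denoting by $X_n$ the analogue of $X$ built from $Z_n$, this gives $X=\bigcup_n X_n$, and it suffices to show $\mathcal{L}^1(X_n)=0$ for every $n$.

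Second, I would invoke an Eilenberg coarea-type inequality (which is exactly the content of \cite[Lemma 1.3]{He-Schramm94}) applied to the $1$-Lipschitz orthogonal projection $\pi:\bbR^2\to\bbR$, $(x,y)\mapsto x$. Applied to a Borel set of finite one-dimensional Hausdorff measure, it yields
\[
\int_{\bbR} \#\bigl(Z_n\cap\pi^{-1}(x)\bigr)\,d\mathcal{L}^1(x)\;\le\;C\,\mathcal{H}^1(Z_n)\;<\;\infty
\]
for an absolute constant $C$. In particular the integrand is finite for $\mathcal{L}^1$-a.e.\ $x$, so for almost every $x$ the section $Z_n\cap\pi^{-1}(x)$ is even finite, hence a fortiori countable. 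Therefore $\mathcal{L}^1(X_n)=0$, and countable subadditivity of Lebesgue measure gives $\mathcal{L}^1(X)\le\sum_n\mathcal{L}^1(X_n)=0$, completing the argument.

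There is essentially no hard step: the only technical input is the Eilenberg/coarea inequality for a Lipschitz projection, which is standard. If one wishes to avoid quoting it, an entirely elementary fallback is to cover $Z_n$, for each $\varepsilon>0$, by countably many sets $\{E_i\}$ of diameter less than $\varepsilon$ with $\sum_i\mathrm{diam}(E_i)<\mathcal{H}^1(Z_n)+\varepsilon$; the projections $\pi(E_i)$ are intervals whose total length is controlled, and a multiplicity-counting argument combined with letting $\varepsilon\to 0$ recovers the same bound. Either way the exceptional set $X$ is Lebesgue null, which is all the subsequent applications (selecting generic radii on which point components of $\partial\Omega$ or $\partial D$ contribute controllably to the oscillation estimate) will require.
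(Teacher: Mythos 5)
Your proof is correct. Note, however, that the paper does not actually prove this lemma: it states it as a direct corollary of \cite[Lemma 1.3]{He-Schramm94} and moves on, so there is no internal argument to compare against. Your derivation is a legitimate self-contained substitute. The reduction to finite linear measure via $Z=\bigcup_n Z_n$ and the observation that an uncountable section of $Z$ forces an uncountable section of some $Z_n$ are both sound, and the appeal to the Eilenberg inequality for the $1$-Lipschitz projection $\pi(x,y)=x$ with $m=n=1$ (so that the fibre measure is $\mathcal{H}^0$, i.e.\ counting measure) gives the stronger conclusion that almost every vertical section of $Z_n$ is \emph{finite}; one only needs the routine remark that a finite upper integral forces the integrand to be finite outside a Lebesgue-null set, so measurability of $x\mapsto\#(Z_n\cap\pi^{-1}(x))$ is not an issue. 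Your ``elementary fallback'' via $\delta$-covers and Fatou's lemma is essentially the standard proof of that special case of Eilenberg's inequality, and is close in spirit to the covering argument He and Schramm themselves use. In short: the paper buys the statement by citation; you buy it by a short piece of classical geometric measure theory, and either route delivers exactly what the later radius-selection arguments (Lemmas \ref{small_level_sets} and \ref{zero linear measure}) require.
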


In the above lemma, we may consider $\mathbb{R}^2$ as the complex plane $\mathbb{C}$, that consists of the points $re^{{\bf i}\theta}$ with $r>0$ and $0\le\theta<2\pi$. Then, we study the set $R_0$ of numbers $r>0$ such that the circle $\left\{re^{{\bf i}\theta}: 0\le\theta<2\pi\right\}$ intersects $Z$ at uncountably many points. For any $r_2>r_1>0$, we see that the part of $Z$ in the annulus $\{z\in\mathbb{C}: r_1\le|z|\le r_2\}$ is sent onto the rectangle $[r_1,r_2]\times[0,2\pi]$ by the map $re^{{\bf i}\theta} \mapsto (r,\theta)$. If we define the distance between $r_1e^{{\bf i}\theta_1}$ and $r_2e^{{\bf i}\theta_2}$ to be $|r_1-r_2|+|\theta_1-\theta_2|$, the previous map and its inverse are both Lipschitz. Therefore, by Lemma \ref{HS-1994_1.3}, we have the following.

\begin{lemm}\label{small_level_sets}
Given a domain $D$ and a point $z_0\in \partial D$. Let $R_0$ denote the set of all $r>0$ such that $C_r(z_0)=\{z: |z-z_0|=r\}$ contains uncountably many point components of $\partial D$. If the boundary components of $\partial D$ form a set with $\sigma$-finite linear measure then $R_0$ has zero Lebesgue measure.
\end{lemm}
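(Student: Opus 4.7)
The plan is to carry out exactly the reduction foreshadowed in the paragraph preceding the statement. First I would translate so that $z_0=0$, which is costless. Rather than working with the potentially delicate set of point components of $\partial D$, I would work with the full boundary $\partial D$: it is closed, hence Borel, and has $\sigma$-finite linear measure by hypothesis. The key observation is that if $C_r(0)$ contains uncountably many singleton components of $\partial D$, then \emph{a fortiori} $C_r(0)\cap\partial D$ is uncountable, since distinct singleton components contribute distinct points. Thus $R_0\subset R_0':=\{r>0:C_r(0)\cap\partial D\text{ is uncountable}\}$, and it suffices to show that $R_0'$ has zero Lebesgue measure.

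To apply Lemma \ref{HS-1994_1.3} I would change variables through the polar map $\Phi(r,\theta)=re^{\mathbf{i}\theta}$. Because $\Phi$ degenerates as $r\to 0$, I would cut the plane into annuli $A_n=\{z:1/n\le|z|\le n\}$ for each integer $n\ge 1$. As noted in the paragraph preceding the lemma, $\Phi$ restricted to $[1/n,n]\times[0,2\pi]$ is bi-Lipschitz onto $A_n$ with respect to the rectangle metric $|r_1-r_2|+|\theta_1-\theta_2|$ (up to the measure-zero identification $\theta=0\sim\theta=2\pi$). Consequently $Z_n:=\Phi^{-1}(\partial D\cap A_n)$ is a Borel subset of $[1/n,n]\times[0,2\pi]\subset\bbR^2$ of $\sigma$-finite linear measure.

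Applying Lemma \ref{HS-1994_1.3} to $Z_n$ then produces a Lebesgue-null set $X_n\subset[1/n,n]$ such that for every $r\in[1/n,n]\setminus X_n$ the vertical section $(\{r\}\times\bbR)\cap Z_n$, which is in bijection with $C_r(0)\cap\partial D\cap A_n$, is countable. Taking the union over $n$, the set $X:=\bigcup_n X_n\subset(0,\infty)$ is still Lebesgue-null, and for any $r>0$ not in $X$ the intersection $C_r(0)\cap\partial D$ is a countable union of countable sets, hence countable. This yields $R_0'\subset X$ and concludes the proof.

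I expect no serious obstacle here, since the paragraph between the two lemmas essentially dictates the argument. The only minor point of care is to sidestep the question of whether the set of point components of $\partial D$ is itself Borel (which can fail in general): by replacing it with $\partial D$ and invoking the trivial implication that uncountably many singleton components force uncountably many points on $C_r(0)$, we can apply Lemma \ref{HS-1994_1.3} to the manifestly Borel set $\partial D$ directly, bypassing the descriptive set-theoretic subtlety altogether.
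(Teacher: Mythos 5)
Your argument is correct and follows essentially the same route as the paper: reduce to the full boundary $\partial D$, transfer to rectangles via the bi-Lipschitz polar map $re^{\mathbf{i}\theta}\mapsto(r,\theta)$ on annuli away from the origin, and apply Lemma \ref{HS-1994_1.3} to each piece. Your explicit reduction from ``uncountably many point components'' to ``uncountably many boundary points'' (thereby sidestepping whether the set of point components is Borel) is a worthwhile clarification of a step the paper leaves implicit, but it does not change the nature of the proof.
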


We also need to the following.
\begin{lemm}[{\bf Wolff's Lemma, \cite[p.  20, Proposition 2.2]{Pom92}}]\label{Wolff}
Let $\varphi$ map a domain $D\subset\mathbb{C}$ conformally into a disk $D_R(0)\subset\mathbb{C}$. Let $\Lambda_r(z_0)$ be the arc length of $\varphi(C_r(z_0)\cap D)$ with $z_0\in\mathbb{C}$, under Euclidean metric. Then we have
\[
\displaystyle\inf\limits_{\rho<r<\sqrt{\rho}}\Lambda_r(z_0)\le\frac{2\pi R}{\sqrt{\log1/\rho}} \quad(0<\rho<1).
\]
\end{lemm}


The proof of Wolff's Lemma also implies the next.
\begin{lemm}
\label{Wolff_lemma_2}
Let $\varphi: D\rightarrow\Omega$ be a conformal map between domains in $\widehat{\mathbb{C}}$ and let $z_0\in\widehat{\mathbb{C}}$. Then for any $\epsilon,\rho\in(0,1)$
there exists a set of positive measure $E\subset(0,\rho)$ such that $\Lambda_r(z_0)<\epsilon$ for all $r\in E$. In particular, if $R_0\subset(0,\infty)$ is a set of measure zero, then $\liminf\limits_{r\rightarrow0,r\notin R_0}\Lambda_r(z_0)=0$.
\end{lemm}

By Wolff's Lemma, we have $\liminf\limits_{r\rightarrow 0}\Lambda_r(z_0)=0$. This is however different from what we need to verify, which is $\liminf\limits_{r\rightarrow 0}\sigma_r(z_0)=0$. Here  the oscillation $\sigma_r(z_0)$ is defined to be the {\bf diameter} of  $\varphi(C_r(z_0)\cap D)$. Also note that a combination of Theorem \ref{topological-cct} with the following Theorem \ref{oscillation2}  implies {\bf Theorem \ref{arsove-1}}.

\begin{theo}\label{oscillation2}
Let $\varphi: D\rightarrow \Omega\subset\hat{\mathbb{C}}$ be a conformal homeomorphism of a circle domain $D$. Assume that the point components of $\partial D$ form a set of $\sigma$-finite linear measure and $\partial\Omega$ has countably many nondegenerate components $\{P_k\}$. Let $Q_k$ be the component of $\partial D$ with $\varphi^B(Q_k)=P_k$ for all $k\ge1$. If for some fixed $n$ the component $P_n$ has the property of {\bf local diameter control}, so that there is an open set $\Omega_n\supset P_n$ that satisfies  $\displaystyle\sum_{P_k\subset \Omega_n}{\rm diam}(P_k)<\infty$, then $\liminf\limits_{r\rightarrow 0}\sigma_r(z_0)=0$  for all $z_0\in\partial Q_n$.
\end{theo}


\begin{remark*}
In Theorem \ref{oscillation2}, we do not require that $\partial\Omega$ be a Peano compactum. The only assumptions are about the linear measure of the point components of $\partial D$ and about the diameters of $P_k$. Therefore, the result we obtain here is just the oscillation convergence  $\liminf\limits_{r\rightarrow 0}\sigma_r(z_0)=0$  for all $z_0\in\partial Q_n$. Here we say nothing about the cluster sets  $C(\varphi,z_0)$  for $z_0\in\partial Q_n$.
\end{remark*}

\begin{proof}[{\bf Proof for Theorem \ref{oscillation2}}]
For any $r>0$ and any $z_0\in Q_n$, let
$C_r(z_0)=\{|z-z_0|=r\}$. By Lemma \ref{small_level_sets}, the
boundary components of $D$ that intersect $C_r(z_0)$ forms a
countable set for all $r>0$ except for those lying in a set $R_0$ of
zero Lebesgue measure.

Let $\Lambda_r(z_0)$ be the {\bf arc length} of
$\varphi(C_r(z_0)\cap D)$.
By Lemma \ref{Wolff_lemma_2}, we have $\liminf\limits_{r\notin R_0, r\rightarrow
0}\Lambda_r(z_0)=0$. Moreover, we can choose for any $\epsilon>0$ a
decreasing sequence of numbers in $(0,1)\setminus R_0$, say
$r_1>r_2>\cdots\cdots$, such that $\lim\limits_{m\rightarrow
0}r_m=0$ and that $\Lambda_{r_m}(z_0)<\frac12\epsilon$ for all
$m\geq 1$.

Let $\mathcal{Q}_D$ be the family of all those nondegenerate components $Q\ne Q_n$ of $\partial D$ that intersect
$\bigcup_mC_{r_m}(z_0)$. Then $\mathcal{Q}_D$ has at most countably many elements, denoted as $T_i$ with $i=1,2,\cdots$.

Fix a point $w_0\in \Omega$ and use Zoretti's Theorem to find for each $\varphi^B(T_i)$ a Jordan curve $\Gamma_i\subset\Omega$ such that $\Gamma_i$ separates $w_0$ from $\varphi^B(T_i)$ and that every point of $\Gamma_i$ is at a distance less than $2^{-i}\epsilon$ from some point of $\varphi^B(T_i)$. Let $W_i$ denote the component of
$\hat{\mathbb{C}}\setminus \Gamma_i$ that contains $\varphi^B(T_i)$;
moreover, let $W_i^*$ denote the component of
$\hat{\mathbb{C}}\setminus \varphi^{-1}(\Gamma_i)$ that contains
$T_i$. Similarly, we can find for $P_n$ a Jordan domain $W\subset\Omega_n$ with $\Gamma=\partial W\subset \Omega$ such that $\Gamma$ separates $w_0$ from $P_n$. The corresponding Jordan domain $W^*$ is a neighborhood of $Q_n$. We may require that every $\varphi^B(T_i)$ lies in $W$ by choosing a
sufficiently small $r_1$, since $T_i\subset W^*$ if and only if $\varphi^B(T_i)\subset W$.

From the assumption that $\displaystyle\sum_{P_k\subset \Omega_n}{\rm diam}(P_k)<\infty$, we have $\displaystyle\sum_{i}{\rm diam}(\Gamma_i)<\infty$. In the sequel, fix an integer $N\ge1$ with $\displaystyle\sum_{i=N+1}^\infty{\rm diam}(\Gamma_i)<\frac12\epsilon$ and choose
an integer $M$ large enough such that $C_{r_m}(z_0)\setminus Q_n$ is disjoint from $\bigcup_{i=1}^NT_i$ for all $m\ge M$.

Consequently, we only need to verify that the
inequality $|\varphi(z_1)-\varphi(z_2)|<\epsilon$ holds for any
points $z_1\ne z_2$ lying on $C_{r_m}(z_0)\cap D$, where $m\ge M$.
To do that, we may consider the closed sub-arc of
$C_{r_m}(z_0)\setminus Q_n$ from $z_1$ to $z_2$. Denote this arc
as $\alpha$, which is a compact set disjoint from each of
$Q_n, T_1,\ldots, T_N$. The components of $\alpha\cap D$ form a
countable family $\{\alpha_t: t\in\mathcal{I}\}$. All these
$\alpha_t$ are open arcs or semi-closed arcs on the circle
$C_r(z_0)$.
Since
\[\left\{W_i^*: i\ge N+1\right\}\ \bigcup\
\left\{\alpha_t: t\in\mathcal{I}\right\}
\]
is a cover of $\alpha$, a compact set, and since each $\alpha_t$ is open in $\alpha$, we may choose finite index sets $\mathcal{J}_0\subset\{N+1,N+2,\ldots\}$ and
$\mathcal{I}_0\subset\mathcal{I}$ such that
\[\mathcal{C}_\alpha^*:=\left\{W_i^*: i\in\mathcal{J}_0\right\}\ \bigcup\
\left\{\alpha_t: t\in\mathcal{I}_0\right\}
\]
is  a finite cover of $\alpha$ and $W_i^*\cap \alpha\neq\emptyset$ for all $i\in\mathcal{J}_0$.
Choose some elements of $\mathcal{C}_\alpha^*$, say $C_1,\ldots,C_n$  with $z_1\in C_1$ and $z_2\in C_n$, that form a chain from $z_1$ to $z_2$ in the sense that $C_i\cap C_{i+1}\ne\emptyset$ for $1\le i\le n-1$.
Notice that  $\mathcal{C}_\alpha:=\{W_i: i\in\mathcal{J}_0\} \cup \{\varphi(\alpha_t): t\in\mathcal{I}_0\}$ consists of finitely many connected sets, by which we can construct a chain from $\varphi(z_1)$ to $\varphi(z_2)$.
Then we can infer  that
\begin{equation}\label{inequality-a}
|\varphi(z_1)-\varphi(z_2)|<\sum_{i\in\mathcal{J}_0}{\rm
diam}(\Gamma_i)+\sum_{t\in\mathcal{I}_0}{\rm
diam}(\varphi(\alpha_t))<\frac12\epsilon+\frac12\epsilon=\epsilon.
\end{equation}
By the flexibility of $z_1,z_2\in C_{r_m}(z_0)\cap
D$, this completes our proof.
\end{proof}

We may assume that the circle domain $D$ in {\bf Theorem \ref{arsove-1}} is just a generalized Jordan domain and keep the other assumptions. In other words, we shall have the following theorem.

\begin{theo}\label{arsove_sigma-new}
Let $\varphi: \Omega_1\rightarrow \Omega_2\subset\hat{\mathbb{C}}$ be a conformal homeomorphism of  a generalized Jordan domain  $\Omega_1$. Suppose that the point components of $\partial\Omega_1$ form a set with $\sigma$-finite linear measure and $\partial\Omega_2$ has at most countably many nondegenerate components $\{P_n\}$ with $\displaystyle\sum_n{\rm diam}(P_n)<\infty$ .  Then $\varphi$  extends continuously to the closure $\overline{\Omega_1}$ if and only if $\partial\Omega_2$ is a Peano compactum.
\end{theo}
\begin{proof}
Let $Q$ be the component of $\partial \Omega_1$ with $\varphi^B(Q)=P$. If $Q$ is a point component of $\partial \Omega_1$, the above proof for Theorem \ref{oscillation2} still works. Thus by Theorem \ref{topological-cct}, the conformal homeomorphism $\varphi: \Omega_1\rightarrow\Omega_2$ continuously extends over $Q$, which indicates that $P$ is necessarily a single point.

If $Q$ is a Jordan curve and $U_0$ is the component of $\hat{\mathbb{C}}\setminus Q$ containing $\Omega_1$, we can find a homeomorphism $H:\hat{\mathbb{C}}\rightarrow \hat{\mathbb{C}}$, sending $Q$ onto the unit circle $\partial\mathbb{D}$,  such that $\left.H\right|_{U_0}$ is a conformal map between $U_0$ and
$\{z\in\hat{\mathbb{C}}: |z|>1\}$. Notice that $\varphi: \Omega_1\rightarrow \Omega_2$ continuously extends over $Q$ if and only if $\varphi\circ H^{-1}: H(\Omega_1)\rightarrow\Omega_2$ continuously extends over $\partial\mathbb{D}$. Denote by $X$ the set consisting of all the point components of $\partial \Omega_1$. Since $H(X)$ consists of all the point components of $\partial H(\Omega_1)$ and hence it is also of $\sigma$-finite linear measure, the above proof for Theorem  \ref{oscillation2} is still valid and we can infer that the oscillation of $\varphi\circ H^{-1}$ at every $w\in\partial\mathbb{D}$ has zero as its inferior limit,
as $r\rightarrow 0$. With this, we may apply Theorem \ref{topological-cct} and infer that   $\varphi\circ H^{-1}: H(\Omega_1)\rightarrow\Omega_2$ continuously extends over $\partial\mathbb{D}=H(Q)$, indicating that $\varphi: \Omega_1\rightarrow\Omega_2$ continuously extends over $Q$.
\end{proof}

\subsection{The Second Generalization of Continuity Theorem}\label{outline-2}
We give a complete proof for  Theorem \ref{arsove-2}.
And the only issue is to verify that $\liminf\limits_{r\rightarrow 0}\sigma_r(z_0)=0$ holds for all $z_0\in\partial D$.
In other words, we need to obtain the following.

\begin{theo}\label{oscillation-a}
Let $\varphi: D\rightarrow \Omega\subset\hat{\mathbb{C}}$ be a conformal homeomorphism of a circle domain $D$. Assume that $\partial\Omega$ has countably many non-degenerate components $\{P_k\}$ and all its point components form a set of $\sigma$-finite linear measure.
Let $Q_k$ be the component of $\partial D$ with $\varphi^B(Q_k)=P_k$ for all $k\ge1$. If for some fixed $n$, $P_n$ has the property of local diameter control, so that there exists an open set $\Omega_n\supset P_n$ satisfying  $\displaystyle\sum_{P_k\subset \Omega_n}{\rm diam}(P_k)<\infty$, then $\liminf\limits_{r\rightarrow 0}\sigma_r(z_0)=0$  for all $z_0\in\partial Q_n$.
\end{theo}

We can infer Theorem \ref{arsove-2} by combining Theorem \ref{topological-cct} and the above Theorem \ref{oscillation-a}. Before proving Theorem \ref{oscillation-a}, we need the following key lemma.

\begin{lemm}\label{zero linear measure}
Let $F_r$ consist of all points $q\in\partial\Omega$ such that $\{q\}=\varphi^B(Q)$ for some component $Q$ of $\partial D$ that intersects $C_{r}(z_0)$. Suppose that all the point components of $\partial \Omega$ constitute a set of $\sigma$-finite linear measure, denoted as $B_\Omega$. Let $E\subset (0, \infty)$. Then the linear measure of $F_r$ is zero for all but countably many  $r\in E$.
\end{lemm}

\begin{proof}
Let $F$ consist of all the points $q\in\partial\Omega$ such that $\{q\}=\varphi^B(Q)$ for some non-degenerate component $Q$ of $\partial D$. As $F$ is at most countable and the sets $\widetilde{F_r}=F_r\setminus F$ are pairwise disjoint, we only need to verify that the linear measure of $\widetilde{F_r}$ is zero for all but countably many  $r\in E$.

To do that, we consider two closed maps. One is the natural projection $\pi_1$ of the upper semi-continuous decomposition $\mathcal{D}_1$ of $\overline{\Omega}$, whose elements are either singletons lying in $\Omega$ or boundary components of $\Omega$. The other one, denoted as $\pi_2$, sends every point $z\in \overline{D}$ to $\pi_1(\varphi(z))$ if $z\in D$ or to $\pi_1\left(\varphi^B(Q)\right)$ if $z$ lies on a component $Q$ of $\partial D$. As $\pi_2\left(C_r(z_0)\cap\partial D\right)$ is closed in the quotient space $\mathcal{D}_1$, its preimage under $\pi_1$ is a closed subset of $\overline{\Omega}$, denoted as $B_r$. Since  $F_r\subset B_r$ and since $B_r\setminus F_r$ consists of at most countably many components of $\partial\Omega$,  every $F_r$ and hence every $\widetilde{F_r}$ is a Borel set. As $B_\Omega$ is assumed to have $\sigma$-finite linear measure, by Borel regularity of  the linear measure $\mathcal{H}^1$ \cite[p. 61, Theorem 1]{Evans_Gariepy} we can write $B_\Omega$ as the union of at most countably many Borel sets $A_i$  with finite linear measure.
Now we claim that   $\mathcal{H}^1\left(\widetilde{F_r}\right)=0$ for all but countably many  $r\in E$. Otherwise,  there would exist some $i\geq 1$ such that $\mathcal{H}^1\left(\widetilde{F_r}\cap A_i\right)>0$ for uncountably many $r$.  This is impossible, since $\widetilde{F_r}\cap \widetilde{F_s}=\emptyset$ for $s\neq r$ and $\mathcal{H}^1(A_i)<\infty$.
\end{proof}

Now we are well prepared to construct a proof for Theorem \ref{oscillation-a}.

\begin{proof}[{\bf Proof for Theorem \ref{oscillation-a}}]
Let $\{k_i: i\ge1\}$ be the collection of all those  $k_i$ with $P_{k_i}\subset \Omega_n$, arranged  so that $k_1<k_2<\cdots$. Here we also recall that $Q_{k_i}$ denotes the component of $\partial D$ with $P_{k_i}=\varphi^B\left(Q_{k_i}\right)$.

Given a point  $z_0\in\partial Q_n$ and an arbitrary number $\epsilon>0$, we shall find a positive number $r<\epsilon$ such that $\sigma_r(z_0)<\epsilon$, which then completes our proof.

To this end, we firstly fix a point $w_0\in \Omega$ and then use
Zoretti's Theorem to find a simple closed curve $\Gamma_i\subset \Omega$ for each $P_{k_i}$ such that $\Gamma_i$ separates $w_0$ from $P_{k_i}$ and that every point of $\Gamma_{k_i}$ is at a distance less than $2^{-i}\epsilon$ from some point of $P_{k_i}$.
Clearly, we have $\sum_{i}{\rm diam}(\Gamma_i)<\infty$. And there is  an integer $N\ge1$ with
$\sum_{i=N+1}^\infty{\rm diam}(\Gamma_i)<\frac12\epsilon.$
In the sequel, for $i\ge1$, let $W_i$ be the component of $\hat{\mathbb{C}}\setminus \Gamma_i$ that contains $P_{k_i}$ and
$W_i^*$ the component of $\hat{\mathbb{C}}\setminus \varphi^{-1}(\Gamma_i)$ that contains $Q_{k_i}$.

By Lemma \ref{Wolff_lemma_2}, we can find $E\subset(0, \epsilon)$ of positive measure such that for all $r\in E$ we have the next three properties: (1) $\displaystyle\Lambda_r(z_0)<\frac14\epsilon$, (2)
$\varphi\left(C_r(z_0)\cap D\right)\subset \Omega_n$, (3) $C_r(z_0)\setminus Q_n$ intersects none of the  components $Q_{k_1},\ldots, Q_{k_N}$ of $\partial D$ for all $r\in E$.
By Lemma \ref{zero linear measure}, the linear measure $\mathcal{H}^1(F_r)$ is zero for some $r\in E$. Thus for the above $\epsilon>0$ we can choose open sets $V_k$, with $k$ running through some index set $\mathcal{K}\subset\mathbb{N}$ that is finite or countably infinite, such that $\{V_k: k\in\mathcal{K}\}$ is an open cover of $F_r$ satisfying $\displaystyle \sum_{k\in\mathcal{K}}{\rm diam}\left(V_k\right)<\frac14\epsilon$. Here all those $V_k$ may be required to have a diameter smaller than an arbitrary constant $\delta>0$.

Since every $V_k$ is open in $\hat{\mathbb{C}}$, we can use Zoretti's Theorem to find  for each $q\in F_r$ a Jordan curve $J_{q}\subset\Omega$, lying in some $V_k$, that separates $w_0$ from the point component $\{q\}$ of $\partial\Omega$. Let $U_q$ be the component of $\hat{\mathbb{C}}\setminus J_q$ that contains $q$. Let $U_q^*$ be the component of $\hat{\mathbb{C}}\setminus\varphi^{-1}(J_q)$   containing  the component of $\partial D$ that is sent to $\{q\}$ by $\varphi^B$.
By the flexibility of $\epsilon$, we only need to verify that for the above mentioned $r$, the inequality $|\varphi(z_1)-\varphi(z_2)|<\epsilon$ holds for any fixed points $z_1\ne z_2$ lying on $C_{r}(z_0)\cap D$.

To this end, we shall consider the closed sub-arc of $C_{r}(z_0)\setminus Q_n$ that connects $z_1$ to $z_2$. This arc, denoted as $\alpha$, is a compact set disjoint from each of $Q_n, Q_{k_1},\ldots, Q_{k_N}$. Moreover, the components of $\alpha\cap D$ form a countable family $\{\alpha_t: t\in\mathcal{I}\}$. Let $\mathcal{J}=\{i: i\ge N+1\}$.
Then
\begin{equation}\label{cover_C0}
\left\{W_i^*: i\in\mathcal{J}\right\}\ \bigcup\
\left\{U_q^*: q\in F_r\right\}\ \bigcup\
\left\{\alpha_t: t\in\mathcal{I}\right\}
\end{equation}
is a cover of $\alpha$. Since each $\alpha_t$ is open in $\alpha$, the above collection  is an open cover of $\alpha$. By the compactness of $\alpha$, we may choose finite index sets $\mathcal{J}_0\subset\mathcal{J}$, $F_0\subset F_r$ and $\mathcal{I}_0\subset\mathcal{I}$, such that
\[\mathcal{C}_\alpha^*:=\left\{W_i^*: i\in\mathcal{J}_0\right\}\ \bigcup\
\left\{U_q^*: q\in F_0\right\}\ \bigcup\
\left\{\alpha_t: t\in\mathcal{I}_0\right\}
\]
is  a finite cover of $\alpha$.
As in Theorem \ref{oscillation2}, using elements of $\mathcal{C}_\alpha^*$ we can find a finite chain from $z_1$ to $z_2$.
Similarly, using the elements of $\mathcal{C}_\alpha:=\{W_i: i\in\mathcal{J}_0\} \ \cup\
\left\{U_q: q\in F_0\right\}\ \cup\
\left\{\varphi(\alpha_t): t\in\mathcal{I}_0\right\}$ we can construct a finite chain from $\varphi(z_1)$ to $\varphi(z_2)$. As every $U_q$ lies in some $V_k$,  we can further find a finite index $\mathcal{K}_0 \subset \mathcal{K}$ such that every $U_q$ with $q\in F_0$ is contained in $V_k$ for some $k\in\mathcal{K}_0$. Now we can use the elements of $\{W_i: i\in\mathcal{J}_0\} \ \cup\
\left\{V_k: k\in \mathcal{K}_0\right\}\ \cup\
\left\{\varphi(\alpha_t): t\in\mathcal{I}_0\right\}$ to  construct a finite chain from $\varphi(z_1)$ to $\varphi(z_2)$. Because of this, the next inequality
\begin{equation}\label{inequality-b}
|\varphi(z_1)-\varphi(z_2)|<\sum_{j\in\mathcal{J}_0}{\rm diam}(\Gamma_j)+\sum_{k\in\mathcal{K}_0}{\rm diam}(V_k)+\sum_{t\in\mathcal{I}_0}{\rm diam}(\varphi(\alpha_t))< \frac12\epsilon+\frac14\epsilon+\frac14\epsilon=
\epsilon
\end{equation}
is also true. By the flexibility of $z_1,z_2\in C_r(z_0)\cap D$, we have completed our proof.
\end{proof}

Now, combing the results of Theorems \ref{oscillation-a} and \ref{topological-cct}, we readily have Theorem \ref{arsove-2}. Moreover,
we may assume that the circle domain $D$ in {\bf Theorem \ref{arsove-2}} is just a generalized Jordan domain and keep the other assumptions. In other words, we shall have the following theorem.

\begin{theo}\label{arsove-new}
Let $\Omega_1$ be a generalized Jordan domain and $\varphi: \Omega_1\rightarrow \Omega_2\subset\hat{\mathbb{C}}$  a conformal homeomorphism. Suppose that the point components of $\partial\Omega_2$ form a set with $\sigma$-finite linear measure and $\partial\Omega_2$ has at most countably many non-degenerate components $\{P_n\}$ with $\displaystyle\sum_n{\rm diam}(P_n)<\infty$.  Then $\varphi$  extends continuously to $\overline{\Omega_1}$ if and only if $\partial\Omega_2$ is a Peano compactum.
\end{theo}
\begin{proof}The proof is similar to and easier than that of Theorem \ref{arsove_sigma-new}, so we omit it here.
\end{proof}

\subsection{Generalization of Osgood-Taylor-Carath\'eodory Theorem}\label{final}
Theorem \ref{OTC-b} concerns generalized Jordan domains that may not be cofat or countably connected. The proof is given below.

\begin{proof}[{\bf Proof for Theorem \ref{OTC-b}}]
If the point components  of $\partial\Omega_2$ form a set of $\sigma$-finite linear measure  we apply Theorem \ref{arsove-new} to the map $\varphi: \Omega_1\rightarrow\Omega_2$ and obtain a well-defined continuous extension $\overline{\varphi}: \overline{\Omega_1}\rightarrow\overline{\Omega_2}$.  Then, applying Theorem \ref{arsove_sigma-new}, we see that the inverse map $\psi=\varphi^{-1}: \Omega_2\rightarrow \Omega_1$ also extends to be a continuous map $\overline{\psi}: \overline{\Omega_2}\rightarrow\overline{\Omega_1}$. Consequently, we can check that $\overline{\varphi}\circ \overline{\psi}=id_{\overline{\Omega_2}}$ and $\overline{\psi}\circ \overline{\varphi}=id_{\overline{\Omega_1}}$. This indicates that $\overline{\varphi}$ and $\overline{\psi}$ are both injective.

If the point components of $\partial \Omega_1$ form a set of $\sigma$-finite linear measure  we apply Theorem \ref{arsove_sigma-new} to the map $\varphi: \Omega_1\rightarrow\Omega_2$ and obtain a well-defined continuous extension $\overline{\varphi}: \overline{\Omega_1}\rightarrow\overline{\Omega_2}$.  Then, applying Theorem \ref{arsove-new} to the inverse map $\psi=\varphi^{-1}: \Omega_2\rightarrow \Omega_1$, we obtain another continuous map $\overline{\psi}: \overline{\Omega_2}\rightarrow\overline{\Omega_1}$ that extends $\psi$. Similarly, we can infer that $\overline{\varphi}$ and $\overline{\psi}$ are both injective.
\end{proof}

\section{Examples Based on Transboundary Extremal Length}\label{Example}

We shall construct a concrete example to show the failure of Theorems \ref{arsove-1} and \ref{arsove-2} without {\bf Assumption 2}. To do that, we need to recall some basics from the theory of transboundary extremal length introduced in \cite{Schramm95}.

Given a domain $U\subset\widehat{\mathbb{C}}$, let $\mathcal{D}_U$ be the upper semi-continuous decomposition of $\widehat{\mathbb{C}}$ consisting of singletons $\{z\}$ with $z\in U$ and the components of $\widehat{\mathbb{C}}\setminus U$. By Moore's decomposition theorem \cite{Moore25}, the resulting hyperspace is topologically equivalent to the sphere. Let $\pi_U: \widehat{\mathbb{C}}\rightarrow\mathcal{D}_U$ be the natural projection, which is often denoted as $\pi$ when the underlying domain $U$ is clearly understood.

Following Schramm \cite{Schramm95}, we denote by $\mathcal{E}(U)$ the above hyperspace $\mathcal{D}_U$ and call it the {\bf ends compactification} of $U$. Moreover, for any $p\in\mathcal{E}(U)$ or $p\subset\mathcal{E}(U)$, we use $\lceil p\rceil$ to denote the pre-image $\pi_U^{-1}(p)$. We also denote by $\mathcal{C}(U)$ the collection of all those $\pi_U(Q)$ with $Q$ running through the components of $\widehat{\mathbb{C}}\setminus U$. In particular, $\mathcal{C}_i(U)$ consists of those $p\in\mathcal{C}(U)$ with $\lceil p\rceil$ being a singleton and $\mathcal{C}_s(U)=\mathcal{C}(U)\setminus\mathcal{C}_i(U)$.

Let $\mu_0$ be a measure  defined by $\mu_0(B)=Area(U\cap\pi_U^{-1}(B))$ for any Borel set $B\subset\mathcal{E}(U)$, using the spherical area. Clearly, $\mu_0(\mathcal{C}(U))=0$. Let $\mu_1$ be the counting measure on $\mathcal{C}(U)$. Then $\mu=\mu_0+\mu_1$ is a measure on the Borel sigma-algebra of $\mathcal{C}(U)$, which will be considered as a copy of the sphere $\widehat{\mathbb{C}}$.

Given an extended metric $m$,  which means a nonnegative function $m: \mathcal{E}(U)\rightarrow[0,\infty]$, its area is defined as $\displaystyle A(m)=\int m^2(x)d\mu$.
The $m$-length of a curve $\gamma: I\rightarrow\mathcal{E}(U)$, denoted $L_m(\gamma)$, consists of two parts. One is the integral
\[
\displaystyle \int_{\gamma^{-1}(U)}m(\gamma(t))|d\gamma(t)|.
\]
Note that $U$ may be considered as a subset of $\mathcal{E}(U)$, so that $\gamma^{-1}(U)$ makes sense.
The other is the summation
$\displaystyle\sum_{p\in\gamma(I)\setminus U}m(p)$. The $m$-length of a collection $\Gamma$ of curves in $\mathcal{E}(U)$ is defined to be $L_m(\Gamma)=\inf_{\gamma\in\Gamma}L_m(\gamma)$. The {\bf (transboundary) extremal length} of $\Gamma$ is given by
\[
\displaystyle EL(\Gamma)=EL_U(\Gamma)=\sup_m\frac{L_m^2(\Gamma)}{A(m)},
\]
where the supremum is taken over all the extended metrics $m$ on $\mathcal{E}(U)$ that have finite area.

The next two propositions are useful in our construction.

\begin{prop}[{\bf\cite[Lemma 1.1]{Schramm95}}]\label{invariance}
Let $f: U\rightarrow U^*$ be a conformal homeomorphism between domains in $\widehat{\mathbb{C}}$, and let $\Gamma$ be a collection of curves in $\mathcal{E}(U)$. Set $\Gamma^*=\{f\circ\gamma: \gamma\in\Gamma\}$. Then $EL_U(\Gamma)=EL_{U^*}(\Gamma^*)$.
\end{prop}

\begin{prop}[{\bf \cite[Theorem 6.1]{Schramm95}}]\label{geom_ineq}
Let $U\subset\widehat{\mathbb{C}}$ be a $\tau$-cofat domain, $0<\tau<1$, and let $A_1,A_2\subset\mathcal{E}(U)$ be two connected disjoint sets. Let $\Gamma$ be the collection of all curves $\gamma$ in $\mathcal{E}(U)$ that have one endpoint in $A_1$ and the other endpoint in $A_2$, let $d$ be the distance from $\lceil A_1\rceil$ to $\lceil A_2\rceil$, and set
$a=\min\{{\rm diameter}\left(\lceil A_1\rceil\right), {\rm diameter}\left(\lceil A_2\rceil\right)\}$. Then
\begin{enumerate}
\item $EL(\Gamma)\le 104\tau^{-1}(1+\frac{d}{a})$.
\item Assuming that $\mathcal{C}(U)$ is countable,
$\displaystyle EL(\Gamma)\ge\frac{\tau d^2}{13(a+d)^2}$.
\end{enumerate}
\end{prop}

\begin{exam}\label{Bishop_Example}
For any $n\ge1$, let $A_n=\left\{z\in\mathbb{C}: 1+2^{-n-1}\le|z|\le1+2^{-n}\right\}$. Consider $A_n$ as the range of $\phi_n: [0,2\pi]\times[0,1]$, with
$\displaystyle \phi(u,v)=\left(1+2^{-n-1}(1+v)\right)\exp(\mathbf{i}u)$.
Let $j_n\ge1$ be the only integer with $\pi\sqrt{n}-1\le j_n<\pi\sqrt{n}$.
Choose $2j_n$ horizontal segments $\alpha_{n,t}(1\le t\le 2j_n)$ of length $\frac{2}{\sqrt{n}}$ in $[0,2\pi]\times[0,1]$.
\begin{figure}[ht]
\begin{center}
\begin{tikzpicture}[x=1.382cm,y=2.5cm,scale=1.618]

\draw[gray,ultra thick] (0,0)--(6.28,0) --(6.28,1.5) --(0,1.5)--(0,0);

\draw[gray,thick,style=dashed] (0,1)--(6.28,1);
\fill[gray!61.8,thin] (5.28,0.95)--(5.61,0.95)--(5.61,0.90)--(5.28,0.90)--(5.28,0.95);

\foreach \i in {0,...,8}
{
    \draw[black,very thick] (\i*1/3,1-\i*0.05) -- (2/3+\i*1/3,1-0.05*\i);
    \draw[blue,very thick] (6.28-\i*1/3,1-\i*0.05) -- (6.28-2/3-\i*1/3,1-0.05*\i);
}

\draw[gray, thin] (8/3,-0.1)--(8/3,1.7);
\fill[black] (8/3,1.65) node[left]{$u=\frac{j_n-1}{\sqrt{n}}$};
\draw[gray,thin] (3.14,-0.1)--(3.14,1.7);
\fill[black] (3.14,1.65) node[right]{$u=\pi$};

\draw[gray, thin] (6.28-2/3,-0.1)--(6.28-2/3,1.7);
\fill[black] (6.28-2/3,1.65) node[right]{$u=2\pi\!-\!\frac{2}{\sqrt{n}}$};
\draw[gray,thin] (6.28-1,-0.1)--(6.28-1,1.7);
\fill[black] (5.28,1.65) node[left]{$u=2\pi\!-\!\frac{3}{\sqrt{n}}$};

\end{tikzpicture}
\end{center}
\vskip -0.5cm
\caption{The segments $\alpha_{n,t}$ and how they are distributed, with $n=9=j_n$.}\label{small_arcs}
\end{figure}
Arrange these segments symmetrically about the vertical line $u=\pi$, as displayed in Figure \ref{small_arcs}, such that they satisfy each of the following  conditions.
\begin{itemize}
\item $\phi_n\left(\alpha_{n,1}\right)\cap\phi_n\left(\alpha_{n,2j_n}\right)$ is a singleton.
\item  $\phi_n\left(\alpha_{n,j_n}\right)\cap\phi_n\left(\alpha_{n,j_n+1}\right)\ne\emptyset$.
\item  $\alpha_{n,t+1}$ is to the right of and slightly {\bf lower} than $\alpha_{n,t}$ for $1\le t\le j_n-1$.
\item  $\alpha_{n,j_n+t+1}$ is to the right of and slightly {\bf higher} than $\alpha_{n,j_n+t}$ for $1\le t\le j_n-1$.
\end{itemize}
Let $\Omega\subset\mathbb{C}$ be the domain obtained by removing from $\{z: 1<|z|<2\}$ all those arcs $\beta_{n,t}$ for $n\ge1$ and $1\le t\le 2j_n-2$, defined by
\begin{equation}\label{beta_n_t}
\beta_{n,t}=\left\{\begin{array}{ll}
\phi_n(\alpha_{n,1}\cup\alpha_{n,2j_n})& t=1\\
\phi_n(\alpha_{n,t})& 2\le t\le j_n-1\\
\phi_n(\alpha_{n,j_n}\cup\alpha_{n,j_n+1})& t=j_n\\
\phi_n(\alpha_{n,t+1})& j_n+1\le t\le j_n-2
\end{array}\right.
\end{equation}
The boundary of $\Omega$ is a Peano compactum that consists of two circles, $C_1=\{z: |z|=1\}$ and $C_2=\{z: |z|=2\}$, together with countably many arcs whose diameters have an {\bf infinite sum} and form a null sequence. By \cite[Theorem 0.1]{He-Schramm93}, there are conformal homeomorphisms $\varphi: D\rightarrow\Omega$ of a circle domain $D$ onto $\Omega$. For any of those $\varphi$, the boundary map $\varphi^B$ sends a point component of $\partial D$ to $C_1$,  a non-degenerate component of $\Omega$. Let $P_0$ be this point component of $\partial D$. Then $\varphi$ can not be continuously extended over $P_0$.
\end{exam}

In the sequel, the domain $\Omega$ is given as in Example \ref{Bishop_Example}. We will prove three lemmas that provide the technical details for Example \ref{Bishop_Example}.

\begin{lemm}\label{estimating_area}
Let the extended metric $m: \mathcal{E}(\Omega)\rightarrow[0,\infty]$ be defined by
\begin{equation}\label{extended_metric}
m(z)=\left\{\begin{array}{ll} 1&z\in \Omega\cup\pi(C_1)\cup\pi(C_2) \\ n^{-0.8}& z=\pi(\beta_{n,t})\  \text{for\ some}\ n\ge1 \ \text{and\ some}\  1\le t\le 2j_n-2
\end{array}\right.
\end{equation}
Then  $A(m)\in(0,\infty)$.
\end{lemm}
\begin{proof}
By the choice of $j_n$ and $m$, it is routine to check for all $n\ge1$ the following equation
\begin{equation}\label{j_n}
\int_{\left\{\pi(\beta_{n,t}): \ 1\le j\le 2j_n-2\right\}}m(z)^2d\mu=\frac{2j_n-2}{n^{1.6}}<\frac{2\pi\sqrt{n}-2}{n^{1.6}}<\frac{2\pi}{n^{1.1}}.
\end{equation}
This ends our proof.
\end{proof}

\begin{lemm}\label{estimating_EL}
Let $m$ be given as in Lemma \ref{estimating_area}. Let $\Gamma$ be the collection of all the curves $\gamma\subset\mathcal{E}(\Omega)$ that connects $\pi(C_2)$  to $\pi(C_1)$. Then $L_m(\Gamma)=\infty$, indicating that $EL(\Gamma)=\infty$.
\end{lemm}
\begin{proof}
Fix a  constant $C>0$ such that the arc length of any curve $\eta\subset\Omega$ as a curve in $\widehat{\mathbb{C}}$ equipped with the spherical metric, which coincides with $L_m(\eta)$, is greater than $C$ times the arc length of $\eta$ as a curve in $\mathbb{C}$ equipped with the Euclidean metric. Also note that every arc $\gamma\in\Gamma$ has to cross the annulus $A_n$ at least once. By the Cut Wire Theorem \cite[p.  72, Theorem 5.2]{Nadler92}, we see that  $\gamma\cap A_n$ has at least one component, denoted as $\gamma_n$, that intersects the two boundary components of $A_n$ both. Then there are two possibilities: either $\gamma_n\ni\pi(\beta_{n,t})$ for some $t$ or $\gamma_n\subset\Omega$. In the first, we have $L_m(\gamma_n)>n^{-0.8}$. In the second,
the choices of the circular slits $\beta_{n,t}$ indicate that $\phi^{-1}(\gamma_n)$ crosses a rectangle $R_n$
whose width is no less than $\frac{1}{\sqrt{n}}$, which further indicates that the arc length of $\gamma_n$ as a curve in $\mathbb{C}$ is greater than $\frac{1}{\sqrt{n}}$. See the right part of Figure \ref{small_arcs} for such a rectangle, depicted in shadow and located between two of the horizontal segments  $\alpha_{n,t}$. The left and right sides lie on two vertical lines, whose equations are both marked.
By the choice of $C>0$, we have $L_m(\gamma_n)\ge\frac{C}{\sqrt{n}}$.
Therefore, we can conclude that $L_m(\gamma_n)>\frac1n$ holds for all but finitely many $n\ge1$. This indicates that the $m$-length of $\gamma$, denoted as $L_m(\gamma)$, is infinite. By the flexibility of $\gamma\in\Gamma$, we further see that the $m$-length  of the whole collection $\Gamma$, defined by $L_m(\Gamma)=\inf_{\gamma\in\Gamma}L_m(\gamma)$, is infinite. Consequently, the extremal length of $\Gamma$ is also infinite.
\end{proof}

\begin{lemm}\label{CT_failed}
Let $\Omega$ be given as in Example \ref{Bishop_Example}. Let $\Gamma$ be given as in Lemma \ref{estimating_EL}. Let $\varphi: D\rightarrow\Omega$ be a conformal homeomorphism of a circle domain $D$ onto $\Omega$. Then the boundary map $\varphi^B$ sends a point component of $\partial D$ to $C_1$.
\end{lemm}
\begin{proof} Let $\psi=\varphi^{-1}$.
Let $\pi_D: \widehat{\mathbb{C}}\rightarrow\mathcal{E}(D)$ be the natural projection. Let $\Gamma^*=\{\psi\circ\gamma: \gamma\in\Gamma\}$ be the collection  consisting of all curves $\psi(\gamma)$ with $\gamma\in\Gamma$. Note that $\psi$ gives rise to a homeomorphism from $\mathcal{E}(\Omega)$ onto $\mathcal{E}(D)$ that extends $\psi$ itself. From this we can infer that $\Gamma^*$ coincides with the collection of all curves in $\mathcal{E}(D)$ connecting $\pi_D\left(\psi^B(C_2)\right)$ to $\pi_D\left(\psi^B(C_1)\right)$. Now, applying Proposition \ref{invariance} we see that the extremal length of $\Gamma^*$ is also infinite. Applying Proposition \ref{geom_ineq}, we further see that one of the two boundary components, $\psi^B(C_2)$ and $\psi^B(C_1)$, is a singleton. As $\psi^B(C_2)$ is isolated, we see that  $\psi^B(C_1)$ is a singleton, which corresponds to $C_1$ under  $\varphi^B$.
\end{proof}

To conclude this section, we present the next remark.
\begin{rema}\label{Bishop_Example-b}
In Example \ref{Bishop_Example} we construct a concrete domain $\Omega$ that satisfies all conditions of {\bf Theorem \ref{arsove-1}} and {\bf \ref{arsove-2}}, except for {\bf Assumption 2}. Since $\partial\Omega$ has countably many components, there are conformal homeomorphisms from a circle domain $D$ onto $\Omega$. However, none of those homeomorphisms extends continuously to the closure of $D$. Moreover, we can change  $\Omega$ appropriately so that it becomes a generalized Jordan domain $\Omega_1$, by slightly thickening the arcs $\beta_{n,t}\subset A_n$ to be pairwise disjoint Jordan curves. Since $\partial\Omega_1$ has countably many components, there exist conformal homeomorphisms from $\Omega_1$ to some circle domain $D$. On the one hand, every conformal homeomorphism $h: \Omega_1\rightarrow D$  has a continuous extension $\overline{h}: \overline{\Omega_1}\rightarrow\overline{D}$. On the other, the inequality $\displaystyle \sum_n{\rm diam}(P_n)+\sum_n{\rm diam}(Q_n)<\infty$ in {\bf Theorem \ref{OTC-b}} does not hold and the extension $\overline{h}$ is one-to-one everywhere except at points on $C_1=\partial\mathbb{D}$, which is sent to a point component of $\partial D$. That is to say, the assumption $\displaystyle \sum_n{\rm diam}(P_n)+\sum_n{\rm diam}(Q_n)<\infty$ in {\bf Theorem \ref{OTC-b}} can not be removed.
\end{rema}

\noindent
{\bf Acknowledgement}. The authors are grateful to Christopher Bishop (Stony Brook of SUNY),  to Malik Younsi (University of Hawaii at Manoa), to Xiaoguang Wang (Zhejiang University) and to Weiyuan Qiu (Fudan University) for  helpful suggestions and/or discussions. In particular, the construction of Example \ref{Bishop_Example} is essentially based on private communication in emails between Christopher Bishop and one of the authors. The authors also owe their thanks to anonymous referee(s) for many suggestions that are of great help for the revisions, especially those related to the updated form of Theorem \ref{OTC-b} and the newly added Lemma \ref{Wolff_lemma_2}, which is directly used in our proofs. Finally, the first named author is supported by
Chinese National Natural Science Foundation Projects
\# 11871483 and \#11771391.

\bibliographystyle{plain}

\end{document}